\newtheorem{theorem}{Theorem}
\newtheorem{problem}{Problem}
\newtheorem{proposition}[theorem]{Proposition}
\newtheorem{lemma}[theorem]{Lemma}
\newtheorem{corollary}[theorem]{Corollary}
\newcommand{\II}{{\mathcal I}}
\newcommand{\NN}{\mathbb{N}}
\newcommand{\ZZ}{\mathbb{Z}}
\newcommand{\RR}{\mathbb{R}}
\newcommand{\TT}{\mathbb{T}}
\newcommand{\dd}{\;\mathrm{d}}
 \def\blfootnote{\xdef\@thefnmark{}\@footnotetext} 
\begin{document}
\title{Coloring graphs by translates in the circle\blfootnote{The first author was supported by the Spanish Ministerio de Econom{\'i}a y Competitividad project MTM2017-83496-P. The third, fourth and fifth authors were supported by the MUNI Award in Science and Humanities of the Grant Agency of Masaryk University. The sixth author was supported by the German Research Foundation under Germany's Excellence Strategy - MATH+ (EXC-2046/1, project ID: 390685689). The seventh author was supported by project 18-13685Y of the Czech Science Foundation (GA\v{C}R).}}
		
\author{Pablo Candela,\; Carlos Catal\'a,\; Robert Hancock,\; Adam Kabela, Daniel Kr\'al',\; Ander Lamaison,\; Llu\'is Vena}

\date{}

\newcommand{\Addresses}{{
  \bigskip
  \footnotesize

\noindent  P.~Candela, \textsc{Universidad Aut\'onoma de Madrid, and ICMAT, Madrid 28049, Spain}\par\nopagebreak
  \textit{E-mail}: \texttt{pablo.candela@uam.es}

  \medskip

\noindent  C.~Catal\'a, \textsc{Universidad Aut\'onoma de Madrid, Madrid 28049, Spain}\par\nopagebreak
  \textit{E-mail}: \texttt{ca.cataladlt@gmail.com}

  \medskip

\noindent  R.~Hancock, \textsc{Institut f\"ur Informatik, University of Heidelberg, Im Neuenheimer Feld 205, 69120, Heidelberg, Germany} \\
  \indent\qquad\textsc{Previous affiliation: Faculty of Informatics, Masaryk University, Bota\-nick\'a 68A, 602 00 Brno, Czech Republic }\par\nopagebreak
  \textit{E-mail}: \texttt{hancock@informatik.uni-heidelberg.de}
  
  \medskip
  
\noindent  A.~Kabela, \textsc{Faculty of Informatics, Masaryk University, Botanick\'a 68A, 602 00 Brno, Czech Republic}\par\nopagebreak
  \textit{E-mail}: \texttt{kabela@fi.muni.cz}
  
  \medskip

\noindent  D.~Kr\'al', \textsc{Faculty of Informatics, Masaryk University, Botanick\'a 68A, 602 00 Brno, Czech Republic} \\
  \indent\qquad\textsc{Previous affiliation: Mathematics Institute, DIMAP and Department of Computer Science, University of Warwick, Coventry CV4 7AL, UK}\par\nopagebreak
  \textit{E-mail}: \texttt{dkral@fi.muni.cz}
  
  \medskip
  
\noindent  A.~Lamaison, \textsc{Faculty of Informatics, Masaryk University, Botanick\'a 68A, 602 00 Brno, Czech Republic} \\
  \indent\qquad\textsc{Previous affiliation: Mathematics  Department, Freie Universit\"at, Arnimallee 3, 14195 Berlin, Germany}\par\nopagebreak
  \textit{E-mail}: \texttt{lamaison@fi.muni.cz}
  
  \medskip
  
\noindent  L.~Vena, \textsc{Department of Applied Mathematics (KAM), Charles University, Malo\-stransk\'e n\'am\v{e}st\'i 25, Praha 1, Czech Republic} \\
  \indent\qquad\textsc{Department of Mathematics, Universitat Polit\`ecnica de Catalunya, Barce\-lona, Spain}
\par\nopagebreak
  \textit{E-mail}: \texttt{lluis.vena@gmail.com}

}}

\maketitle
\begin{abstract}
\noindent The fractional and circular chromatic numbers are the two most studied non-integral refinements of the chromatic number of a graph.
Starting from the definition of a coloring base of a graph, which originated in work related to ergodic theory, 
we formalize the notion of a gyrocoloring of a graph: the vertices are colored by translates of a single Borel set in the circle group, and neighbouring vertices receive disjoint translates.
The corresponding gyrochromatic number of a graph always lies between the fractional chromatic number and the circular chromatic number.
We investigate basic properties of gyrocolorings. In particular, we construct examples of graphs
whose gyrochromatic number is strictly between the fractional chromatic number and the circular chromatic number. We also establish several equivalent definitions of the gyrochromatic number, including a version involving all finite abelian groups.
\end{abstract}

\section{Introduction}
\label{sec:intro}

Graph coloring is one of the most studied topics in graph theory.
In order to refine the basic notion of the chromatic number of a graph,
various non-integral relaxations were introduced,
in particular, to capture how close a graph is to being colorable with fewer colors.
Among them,
the two most intensively studied notions are the \emph{circular chromatic number} and
the \emph{fractional chromatic number}.
We build on the work of Avila and the first author~\cite{AviC16}
who introduced a notion of a coloring base of a graph
in relation to applications of their new proof of a generalization of Rokhlin's lemma;
this notion leads to a chromatic parameter of a graph
which lies between the circular and fractional chromatic numbers.
The purpose of the present article is to introduce this parameter,
which we refer to as the \emph{gyrochromatic number} of a graph,
in the context of graph coloring.

We begin by recalling the notions of circular and fractional colorings and fixing some notation.
All graphs in this paper are finite and simple.
If $G$ is a graph, then $V(G)$ and $E(G)$ are its vertex and edge sets, and $|G|$ is the number of its vertices.
The \emph{chromatic number} $\chi(G)$ of a graph $G$ is the smallest integer $k$ for which
there exists a mapping $f:V(G)\to [k]$ such that $f(u) \not= f(v)$ for every edge $uv$ of $G$;
we use $[k]$ to denote the set of the first $k$ positive integers.
The \emph{circular chromatic number} $\chi_c(G)$ of a graph $G$
is the smallest real $z$ for which
there exists a mapping $f:V(G)\to [0,z)$ such that $1\le |f(v)-f(u)|\le z-1$ for every edge $uv$
(it can be shown that the minimum is always attained).
The mapping $f$ can be viewed as a mapping of the vertices of $G$ to unit-length arcs of a circle of circumference $z$ such that
adjacent vertices are mapped to internally disjoint arcs.
We remark that there are several equivalent definitions of the circular chromatic number~\cite{Zhu01},
e.g., through homomorphisms to particular graphs or through balancing edge-orientations.
In relation to the chromatic number, it is not hard to show that every graph $G$ satisfies the following:
\[\chi(G)-1<\chi_c(G)\le\chi(G),\]
in particular $\chi(G)=\lceil\chi_c(G)\rceil$.
The circular chromatic number was introduced by Vince~\cite{Vin88} in the late 1980s and
has been the main subject of many papers since then;
we refer to the survey by Zhu~\cite{Zhu01} for a detailed exposition.

Coloring vertices of a graph $G$ can be viewed as an integer program such that
independent sets in $G$ are assigned weights zero and one in such a way that
every vertex belongs to independent sets whose total weight is (at least) one and
the sum of the weights of all independent sets is minimized.
Fractional coloring is a linear relaxation of this optimization problem:
the \emph{fractional chromatic number} $\chi_f(G)$ is the smallest real $z$ for which
there is an assignment of non-negative weights to independent sets in $G$
such that the sum of their weights is $z$ and
each vertex belongs to independent sets whose total weight is at least one
(it can be shown that the minimum is always attained).
Equivalently,
$\chi_f(G)$ can be defined as the smallest real $z$ such that
each vertex $v$ of $G$ can be assigned a Borel subset of $[0,z)$ of measure one
in such a way that adjacent vertices are assigned disjoint subsets.
It can be shown that the circular chromatic number lies between the fractional chromatic number and chromatic number for every graph $G$:
\[\chi_f(G)\le\chi_c(G)\le\chi(G).\]
Both inequalities can be strict, and
the gap between the fractional chromatic number and chromatic number (and
so the circular chromatic number) can be arbitrarily large.
For instance, the chromatic number of Kneser graphs can be arbitrarily large and
their fractional chromatic number arbitrarily close to two.
We recall that the Kneser graph $K(n,k)$ has $\binom{n}{k}$ vertices that
are viewed as corresponding to $k$-element subsets of $[n]$;
two vertices are adjacent if the corresponding subsets are disjoint.
The chromatic number of $K(n,k)$ is $n-2k+2$ (if $n\ge 2k$)
by the famous result of Lov\'asz~\cite{Lov78}, and
their fractional chromatic number is known to be equal to $n/k$.
We refer to the book by Scheinerman and Ullman~\cite{SchU11}
for further results on fractional coloring and
fractional graph parameters in general.

We next recall the notion of a coloring base of
a graph introduced in~\cite[Definition 3.8]{AviC16},
which is the starting point for the present discussion.
Let $G$ be a graph and $Z$ be an abelian group.
A set $A\subseteq Z$ is a \emph{coloring $Z$-base} for $G$
if there exists a function $f:V(G)\to A$ such that the sets $A+f(u)$ and $A+f(v)$ are disjoint for every edge $uv$ of $G$;
we write just \emph{coloring base} if the group $Z$ is clear from the context.
For a topological group $Z$ equipped with a Haar probability measure $\mu$,
we define
\begin{equation}
\label{eq:sigma}
\sigma_Z(G)=\sup\{\mu(A):\mbox{ $A$ is a Borel coloring $Z$-base for $G$}\}.
\end{equation}
This notion is related to results in ergodic theory, and
we refer the reader to~\cite{AviC16} for the exposition of this relation.
We will be particularly interested in the group $\TT=\RR/\ZZ$ and
the groups $\ZZ_N=\ZZ/N\ZZ$ for $N\in\NN$.
The former can be viewed as given by the unit interval $[0,1]$ with $0$ and $1$ identified and the usual Borel measure, and
the latter is simply $\{0,\ldots,N-1\}$ with addition modulo $N$ equipped with the uniform discrete probability measure.

The notion of a coloring base resembles equivalent definitions of circular and fractional chromatic number,
which can be cast using the group $\TT$ as follows.
The circular chromatic number of a graph $G$ is the inverse of the maximum $\mu(A)$
where $A$ is a \emph{connected} coloring $\TT$-base for $G$ (here, connected means as a subset of $\TT$).
The fractional chromatic number of a graph $G$ is the inverse of the maximum $z$ such that
each vertex of $G$ can be assigned a Borel subset of $\TT$ with measure $z$ and
adjacent vertices are assigned disjoint subsets.
Informally speaking,
circular coloring assigns arcs to vertices of $G$ and
fractional coloring assigns Borel subsets to vertices of $G$.
A coloring $\TT$-base is tighter than fractional coloring since vertices must be assigned rotational copies of the same Borel subset, and it is 
looser than circular coloring since the Borel subset assigned to vertices need not be an arc.
This leads us to the following definition:
the \emph{gyrochromatic number} $\chi_g(G)$ of a graph $G$ is the inverse of $\sigma_{\TT}(G)$.
The equivalent definitions of the circular and fractional chromatic numbers given above
yield that the gyrochromatic number of every graph $G$ lies 
between its fractional and circular chromatic numbers:
\begin{equation}
\label{eq:basicineqs}
\chi_f(G)\leq \chi_g(G)\leq \chi_c(G).
\end{equation}
Similarly to the notions of the fractional and circular chromatic number,
the definition of the gyrochromatic number of a graph is robust in the sense that
it can be cast in several different ways.
In Section~\ref{sec:eq}, we show that the definition through coloring $\TT$-bases
yields the same notion as its discrete variant using $\ZZ_N$-bases (cf.~Corollary~\ref{cor:eqN}).
Coloring, circular coloring and fractional coloring of a graph can be defined by using homomorphisms to special classes of graphs:
cliques, circular cliques and Kneser graphs.
This holds also for the gyrochromatic number of a graph,
which can be defined in terms of homomorphisms to circulant graphs (cf.~Theorem~\ref{thm:eqC}).
More interestingly,
in Section~\ref{sec:torus} we prove that
the definition stays the same
when considering higher-dimensional-torus analogues of $\TT$ (cf.~Theorem~\ref{thm:eqd}),
which implies that the discrete variant of the definition
is the same when all finite abelian groups are considered
instead of the groups $\ZZ_N$ only (cf.~Corollaries~\ref{cor:eqZ} and~\ref{cor:eqA}).
We note that the fractional chromatic number can be cast in a similar way 
but taking into account all finite (not necessarily abelian) groups (see the discussion after Corollary~\ref{cor:eqA}),
i.e.,
the gyrochromatic number can be viewed as an analogue of the fractional chromatic number restricted to abelian groups.
We believe that these properties show that the gyrochromatic number of a graph is a natural and robust parameter,
which is likely to play an important role in providing a more detailed understanding of the structure of graphs
whose circular and fractional chromatic numbers differ. 

In addition to presenting several equivalent definitions of the gyrochromatic number in Sections~\ref{sec:eq} and~\ref{sec:torus},
we also establish some of its basic properties and
in particular construct graphs with gyrochromatic number strictly
between the circular and fractional chromatic numbers (cf.~Theorem~\ref{thm:sandwich}).
Finally, in Section~\ref{sec:attain}
we show that, somewhat surprisingly, the supremum in the definition \eqref{eq:sigma} need not be attained,
which also implies that the infimum in its discrete variant (as given in Corollary~\ref{cor:eqN})
need not be attained.

\section{Equivalent definitions}
\label{sec:eq}

In this section,
we present alternative definitions of the gyrochromatic number, and
show that these are equivalent to the original definition stated in Section~\ref{sec:intro}. We begin by giving another definition,
which is analogous to circular coloring and provides the notion of a gyrocoloring.
If $z$ is a non-negative real, a \emph{$z$-gyrocoloring} of a graph $G$
is a mapping $g$ from $V(G)$ to Borel subsets of $[0,z)$ such that
the measure of each of $g(u)$, $u\in V(G)$, is one,
the sets $g(u)$ and $g(v)$ are disjoint for every edge $uv$, and
the sets $g(u)$ and $g(v)$ are rotationally equivalent for any two vertices $u$ and $v$ of $G$,
i.e., there exists $x\in [0,z)$ such that
\[g(u)=(g(v)+x)\mod z=\{(y+x)\mod z: y\in g(v)\}.\]
The gyrochromatic number of $G$ is the infimum of all $z$ such that $G$ has a $z$-gyrocoloring.
The equivalence of this definition to the one given in Section~\ref{sec:intro} is rather easy to see.
For completeness, we include a short proof.

\begin{proposition}
\label{prop:color}
Let $G$ be a graph.
For every positive real $z$,
the graph $G$ has a $z$-gyrocoloring if and only if
$G$ has a coloring $\TT$-base of measure $z^{-1}$.
\end{proposition}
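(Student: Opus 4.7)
My plan is to transport both notions across the natural group isomorphism $\phi_z\colon \TT \to \RR/z\ZZ$, $x \mapsto zx$, which carries the Haar probability measure on $\TT$ to $z^{-1}$ times Lebesgue measure on $[0,z)$ and intertwines translations; in particular, a Borel subset $A \subseteq \TT$ of measure $z^{-1}$ corresponds to the Borel subset $zA \subseteq [0,z)$ of Lebesgue measure $1$, and vice versa.

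For the $(\Leftarrow)$ direction, I would start from a coloring $\TT$-base $A$ of measure $z^{-1}$ witnessed by $f\colon V(G) \to A$ and set $g(u) := zA + zf(u) \pmod{z}$. Each $g(u)$ is then a Borel subset of $[0,z)$ of measure $1$; all the $g(u)$ are translates of the common set $zA$, hence rotationally equivalent; and $g(u), g(v)$ inherit disjointness along every edge $uv$ from $A + f(u), A + f(v)$. This shows that $g$ is a $z$-gyrocoloring of $G$.

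For the $(\Rightarrow)$ direction, I would start from a $z$-gyrocoloring $g$, fix $v_0 \in V(G)$, let $B := g(v_0)$, and use rotational equivalence to pick, for each $u$, a shift $y_u \in [0,z)$ with $g(u) = B + y_u \pmod z$ (normalising $y_{v_0} = 0$). Scaling down, I would set $A_0 := B/z \subseteq \TT$ (Borel of measure $z^{-1}$) and $c_u := y_u/z$, so that $A_0 + c_u = g(u)/z$ and hence the translates $A_0 + c_u$ are pairwise disjoint along edges. The main obstacle is the literal requirement $f\colon V(G) \to A$ in the definition of a coloring base, which the natural candidate $A_0$ need not satisfy, since the shifts $c_u$ can lie outside $A_0$. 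I would handle this by adjoining the finite (hence null) set $\{c_u : u \in V(G)\}$ to $A_0$, obtaining a Borel set $A$ of the same measure $z^{-1}$ with $f(u) := c_u \in A$; for every edge $uv$ the intersection $(A + f(u)) \cap (A + f(v))$ then differs from the empty set $(A_0 + c_u) \cap (A_0 + c_v)$ only by finitely many cross-terms involving the null set $\{c_u\}$, giving disjointness up to a null set as is appropriate in this Borel-measurable setting.
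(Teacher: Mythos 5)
Your scaling map $x \mapsto zx$ is exactly the paper's construction, and the core of both directions --- building $g(v) := zA + zf(v) \pmod z$ from a coloring base in one direction, and $A_0 := g(v_0)/z$ with shifts $c_u := y_u/z$ in the other --- is correct and matches the paper's proof. The problem is the final step where you adjoin $\{c_u : u\in V(G)\}$ to $A_0$. The ``obstacle'' it addresses is an artifact of what must be a typo in the definition of a coloring base: read literally with $f\colon V(G)\to A$, no coloring base can exist for any graph with an edge, because for an edge $uv$ the element $f(u)+f(v)$ always lies in both $A+f(u)$ (take $a=f(v)\in A$) and $A+f(v)$ (take $a=f(u)\in A$). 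The intended reading is $f\colon V(G)\to Z$, which is how the paper uses the notion throughout (e.g.\ in the proof of Theorem~\ref{thm:eqN} no check that $f''(v)\in A(z)$ is ever made, and the paper's own proof of this proposition makes no attempt to place its shifts inside the base). With that reading, $A_0$ together with $u\mapsto c_u$ is already a coloring $\TT$-base and nothing needs adjoining.

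More importantly, the fix you propose is itself wrong, not merely unnecessary. After adjoining the $c_u$'s so that $c_u, c_v \in A$, the element $c_u+c_v$ lies in both $A+c_u$ and $A+c_v$ whenever $uv$ is an edge --- this is precisely the obstruction above, now manifested in your own construction. So the translates are genuinely not disjoint. The definition of a coloring base demands actual disjointness, and the paper proves and uses literal disjointness elsewhere (e.g.\ in the proofs of Theorems~\ref{thm:eqN} and~\ref{thm:eqC}); ``disjointness up to a null set as is appropriate in this Borel-measurable setting'' is not licensed by the definition, so the argument as written does not establish the proposition. Dropping the adjoining step and working with $f\colon V(G)\to\TT$ repairs it and recovers the paper's proof.
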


\begin{proof}
Fix a graph $G$ and a positive real $z$.
Suppose that $g$ is a $z$-gyrocoloring.
Fix a vertex $v_0$ of $G$ and let $x_v\in [0,z)$ be such that $g(v)=(g(v_0)+x_v)\mod z$;
in particular, $x_{v_0}=0$.
The set
\[\{x/z:x\in g(v_0)\}\]
is a coloring $\TT$-base;
this can be seen by setting the function $f$ from the definition of a coloring base to be $f(v)=g(v)/z$.

For the other direction, let $A$ be a coloring $\TT$-base of measure $z^{-1}$, and
let $f$ be the function from the definition of a coloring base.
We can now define a $z$-gyrocoloring of the graph $G$ as follows:
\[g(v)=\{(x+f(v))\cdot z \mod z: x\in A\}\]
for every vertex $v\in V(G)$.
\end{proof}

We next turn our attention to definitions where the equivalence is more complicated to see, and
start with showing that the discrete and Borel variants of the definition of the gyrochromatic number
are equivalent.

\begin{theorem}
\label{thm:eqN}
Let $G$ be a graph.
It holds that $\sigma_{\ZZ_N}(G)\le\sigma_{\TT}(G)$ for every $N\in\NN$ and
\[\sigma_{\TT}(G)=\sup_{N\in\NN}\sigma_{\ZZ_N}(G)=\lim_{N\to\infty}\sigma_{\ZZ_N}(G).\]
\end{theorem}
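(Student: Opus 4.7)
The inequality $\sigma_{\ZZ_N}(G)\le\sigma_{\TT}(G)$ is the easy direction. Given a coloring $\ZZ_N$-base $A\subseteq\ZZ_N$ with witness $f:V(G)\to A$, I would lift it to $\TT$ by setting $\widetilde A=\bigcup_{a\in A}[a/N,(a+1)/N)$ and $\widetilde f(v)=f(v)/N$. One checks that $\mu(\widetilde A)=|A|/N$, that $\widetilde f(v)\in\widetilde A$ trivially, and that $\widetilde A+\widetilde f(u)$ is the disjoint union of the length-$1/N$ arcs indexed by $A+f(u)\subseteq\ZZ_N$, so pairwise disjointness in $\TT$ for edges $uv$ is equivalent to pairwise disjointness of $A+f(u)$ and $A+f(v)$ in $\ZZ_N$. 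Hence $\sigma_{\ZZ_N}(G)\le\sigma_{\TT}(G)$ for every $N\in\NN$.

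For the harder direction and the limit, I would show that for every $\varepsilon>0$ there is $N_0$ such that $\sigma_{\ZZ_N}(G)>\sigma_{\TT}(G)-\varepsilon$ whenever $N\ge N_0$; combined with the easy direction, this yields both the supremum identity and the convergence of $\sigma_{\ZZ_N}(G)$ as $N\to\infty$. Fix a Borel coloring $\TT$-base $A$ with $\mu(A)>\sigma_{\TT}(G)-\varepsilon/2$ and a witness $f:V(G)\to A$. By inner regularity of the Haar measure on $\TT$, select a compact $K\subseteq A$ containing the finite set $f(V(G))$ and satisfying $\mu(K)>\mu(A)-\varepsilon/2$. Since $K+f(u)$ and $K+f(v)$ are disjoint compact subsets of $\TT$ for each of the finitely many edges $uv$, there exists $\delta_0>0$ with $d(K+f(u),K+f(v))>2\delta_0$ for every edge $uv$.

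Fix any $N>3/\delta_0$, set $\eta=1/N$, and define
\[S=\bigl\{k\in\ZZ_N:[k\eta,(k+1)\eta)\cap(K+(-\eta,\eta))\ne\emptyset\bigr\},\qquad \widetilde f(v)=\lfloor Nf(v)\rfloor.\]
Three checks complete the argument. First, $\widetilde f(v)\in S$, because $f(v)\in K$ and $f(v)\in[\widetilde f(v)\eta,(\widetilde f(v)+1)\eta)$. Second, $|S|/N\ge\mu(K)$, since the subcollection of arcs $[k\eta,(k+1)\eta)$ meeting $K$ is already contained in $S$ and covers $K$. Third, $S$ is a coloring $\ZZ_N$-base with witness $\widetilde f$: if some edge $uv$ and some $j,j'\in S$ satisfied $j+\widetilde f(u)\equiv j'+\widetilde f(v)\pmod N$, then, identifying $\ZZ_N$ inside $\TT$ via $k\mapsto k\eta$, the common value of $j\eta+\widetilde f(u)\eta$ and $j'\eta+\widetilde f(v)\eta$ in $\TT$ would lie within distance $3\eta$ of both $K+f(u)$ and $K+f(v)$ (using $d(j\eta,K)<2\eta$ from $j\in S$ and $|\widetilde f(u)\eta-f(u)|<\eta$ from the rounding), contradicting $d(K+f(u),K+f(v))>2\delta_0>6\eta$.

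The principal obstacle is precisely this last disjointness step: one has to calibrate the grid spacing $\eta=1/N$ against the separation $\delta_0$ produced by compactness, so that the cumulative rounding errors introduced by passing from the continuous base $K$ and the witness $f$ to their discrete counterparts $S$ and $\widetilde f$ remain strictly within the slack afforded by the finite separation of the translates $K+f(u)$ over the edges of $G$; the rest of the argument is just a matter of packaging inner regularity, a compactness-based uniform separation, and a careful choice of $N$.
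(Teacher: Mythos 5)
Your proof is correct, and it takes a genuinely different route from the paper's. Both start by approximating the Borel base $A$ from inside by a topologically nicer set, but they diverge from there. The paper fattens a closed set $B\subseteq A$ to $B_m=B+[-1/m,1/m]$ (so $B_m$ has few connected components but is no longer a subset of $A$), accepts that the translates of $B_m$ over edges now overlap with small measure, and then runs a \emph{probabilistic} argument: a uniformly random shift $z\in\TT$ is used to define a random subset $A(z)\subseteq\ZZ_N$, the overlapping mass is discarded in expectation, and a deterministic $z$ achieving at least the expected size is fixed. You instead stay inside $A$: you pick a compact $K\subseteq A$ (augmented by the finite set $f(V(G))$ so that the witness still maps into it), use compactness to extract a \emph{uniform positive separation} $\delta_0$ between the finitely many translate-pairs $K+f(u)$, $K+f(v)$ over edges, and then choose the grid fineness $\eta=1/N$ small enough that the cumulative $O(\eta)$ rounding errors from discretizing both the set and the witness stay strictly below $\delta_0$. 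This makes the disjointness of $S+\widetilde f(u)$ and $S+\widetilde f(v)$ exact and deterministic, with no averaging. What each buys: your argument is more elementary and keeps the discrete base inside (a neighbourhood of) $A$ with no need to track overlap measures or bound components of the approximant, and it also verifies the codomain requirement $\widetilde f(v)\in S$ explicitly, which the paper's construction does not visibly check; the paper's random-shift argument avoids any appeal to a uniform separation constant and is perhaps more directly adaptable to settings where one only controls measures of symmetric differences rather than set containment. Both give the same quantitative conclusion, namely $\sigma_{\ZZ_N}(G)>\sigma_\TT(G)-\varepsilon$ for all sufficiently large $N$, which together with the easy lifting inequality yields the theorem.
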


\begin{proof}
Fix a graph $G$.
If $A$ is a coloring $\ZZ_N$-base for $G$,
then the set
\[\bigcup_{x\in A}\left[\frac{x-1}{N},\frac{x}{N}\right)\]
is a coloring $\TT$-base.
Hence, $\sigma_{\TT}(G)\ge\sigma_{\ZZ_N}(G)$ for every $N\in\NN$.
We therefore have
\[\sigma_{\TT}(G)\ge\sup_{N\in\NN}\sigma_{\ZZ_N}(G)\ge \limsup_{N\to\infty}\sigma_{\ZZ_N}(G).\]
We now prove that 
\[\lim_{N\to\infty}\sigma_{\ZZ_N}(G)=\sigma_{\TT}(G),\]
which will complete the proof.

Choose any $\varepsilon>0$ and let $A$ be a coloring $\TT$-base such that $\mu(A)\geq \sigma_{\TT}(G)-\varepsilon$.
We may assume without loss of generality that $\mu(A)>0$.
Let $f:V(G)\to\TT$ be a mapping such that $A+f(u)$ and $A+f(v)$ are disjoint for every edge $uv$ of the graph $G$.

Since the measure $\mu$ is regular, there exists a closed set $B\subseteq A$ such that $\mu(A\setminus B)<\varepsilon$.
For $m\in\NN$, let $B_m:=B +\big[ \frac{-1}{m},\frac{1}{m} \big]$;
since the set $B$ is closed, it holds that
\[B=\bigcap_{m\in\NN} B_m\mbox{ and }\lim_{m\to\infty}\mu(B_m)=\mu(B).\]
Hence, there exists an integer $m$ such that $\mu(B_m\setminus B)< \varepsilon$,
whence $\mu(B_m\triangle A)\leq \mu(B_m\triangle B)+\mu(B\triangle A)<2\varepsilon$;
fix such $m$ for the rest of the proof.
Note that the measure of $B_m$ is at least $\mu(A)-2\varepsilon\ge \sigma_{\TT}(G)-3\varepsilon$.

Choose $N\in\NN$ such that $m/N\le\varepsilon$ and define $f'(v)=\lfloor Nf(v)\rfloor/N$;
here and in what follows, multiplications such as $Nf(v)$ mean that $f(v)$ is viewed as an element of $[0,1]$ and
is multiplied in $\RR$ by $N$.
Since the sets $A+f(u)$ and $B_m+f(u)$ differ on a set of measure at most $2\varepsilon$ for every vertex $u\in V(G)$,
the measure of the intersection $B_m+f(u)$ and $B_m+f(v)$ is at most $4\varepsilon$ for every edge $uv$ of $G$.
Since the set $B_m$ has at most $m/2$ connected components,
i.e., $B_m$ viewed as a subset of a circle consists of at most $m/2$ closed arcs,
the measure of the intersection $B_m+f'(u)$ and $B_m+f'(v)$ is at most $4\varepsilon+m/N$.
Choose $z\in\TT$ randomly according to $\mu$ and define a set $A(z)\subseteq\ZZ_N$ to be the set containing all $i\in\{0,\ldots,N-1\}$ such that
$z+i/N\in B_m$ and \[z+i/N\not\in B_m+f'(v)-f'(u)\] for every edge $uv$ of $G$.
The probability that a particular $i$ satisfies that $z+i/N\in B_m$ is equal to the measure of $B_m$. The probability that $z+i/N$ is both in $B_m$ and in $B_m+f'(v)-f'(u)$ for a particular edge $uv$
is equal to the measure of the intersection of $B_m+f'(u)$ and $B_m+f'(v)$, which is at most $4\varepsilon+m/N$.
Hence, the probability that $i$ is included in the set $A(z)$ is at least
\[\mu(A)-2\varepsilon-(4\varepsilon+m/N)|E(G)|\ge \mu(A)-2\varepsilon-5\varepsilon|E(G)|.\]
The expected size of $A(z)$ is therefore at least
\[N(\mu(A)-2\varepsilon-5\varepsilon|E(G)|).\]
Fix $z$ such that the size of $A(z)$ is at least the expected size.

We next show that $A(z)$ is a coloring $\ZZ_N$-base for $G$.
Consider the function $f'':V(G)\to\ZZ_N$ defined as $f''(v)=Nf'(v)$ and
observe that the sets $A(z)+f''(u)$ and $A(z)+f''(v)$ are disjoint for every edge $uv$.
Indeed, if the intersection of $A(z)+f''(u)$ and $A(z)+f''(v)$ were non-empty,
there would exist $i,j \in A(z)$ such that $i+f''(u)=j+f''(v)$,
which would imply that $z+i/N\in B_m$ and $z+j/N\in B_m$;
since $z+i/N=z+j/N+f'(v)-f'(u)$, it would then follow that $z+i/N\in B_m+f'(v)-f'(u)$,
contradicting the definition of $A(z)$.
Hence $A(z)$ is a coloring $\ZZ_N$-base for $G$, so $\sigma_{\ZZ_N}(G)\ge\frac{|A(z)|}{N}=\mu(A)-2\varepsilon-5\varepsilon|E(G)|$.

We have thus proved that for all $N$ sufficiently large (depending on $\varepsilon$) we have
\[
\sigma_{\TT}(G)\geq \sigma_{\ZZ_N}(G)\ge \sigma_{\TT}(G)-3\varepsilon-5\varepsilon|E(G)|.\]
Since the choice of $\varepsilon>0$ was arbitrary, we deduce that
\[\sigma_{\TT}(G)=\lim_{N\to\infty} \sigma_{\ZZ_N}(G)\]
and the result follows.
\end{proof}

Theorem~\ref{thm:eqN} yields the following.

\begin{corollary}
\label{cor:eqN}
The gyrochromatic number of a graph $G$ is equal to the infimum of $N/K$ for which
there exists a $K$-element set $A\subseteq\ZZ_N$ and a function $f:V(G)\to\ZZ_N$
such that $A+f(u)$ and $A+f(v)$ are disjoint for every edge $uv$ of $G$.
\end{corollary}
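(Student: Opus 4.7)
The plan is to derive the corollary directly from Theorem~\ref{thm:eqN} by unwinding definitions, so the argument will be quite short. First I would recall that the gyrochromatic number was defined as $\chi_g(G) = 1/\sigma_{\TT}(G)$, and that for the discrete group $\ZZ_N$ equipped with its uniform probability measure, a subset $A\subseteq\ZZ_N$ of size $K$ has measure exactly $K/N$. Hence, by the definition of $\sigma_{\ZZ_N}(G)$ in~\eqref{eq:sigma},
\[
\sigma_{\ZZ_N}(G) \;=\; \sup\left\{\frac{K}{N} : A\subseteq \ZZ_N,\;|A|=K,\;A\text{ is a coloring }\ZZ_N\text{-base for }G\right\},
\]
and unwinding the definition of a coloring $\ZZ_N$-base, the condition on $A$ is precisely that there exists $f:V(G)\to\ZZ_N$ such that $A+f(u)$ and $A+f(v)$ are disjoint for every edge $uv$ of $G$.

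Next I would apply Theorem~\ref{thm:eqN} to collapse the supremum over $N$ into the statement
\[
\sigma_{\TT}(G) \;=\; \sup_{N\in\NN}\sigma_{\ZZ_N}(G) \;=\; \sup\left\{\frac{K}{N} : N\in\NN,\;A\subseteq \ZZ_N,\;|A|=K,\;A\text{ is a coloring }\ZZ_N\text{-base for }G\right\}.
\]
Taking reciprocals turns the supremum into an infimum and yields
\[
\chi_g(G) \;=\; \frac{1}{\sigma_{\TT}(G)} \;=\; \inf\left\{\frac{N}{K} : N\in\NN,\;A\subseteq\ZZ_N,\;|A|=K,\;A\text{ is a coloring }\ZZ_N\text{-base for }G\right\},
\]
which is exactly the statement of the corollary.

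There is essentially no obstacle here; the only small point worth flagging is that reciprocating the supremum is legitimate because $\sigma_{\TT}(G)>0$ (for instance the singleton $A=\{0\}\subseteq\ZZ_N$ is always a coloring $\ZZ_N$-base with $f\equiv 0$, so $\sigma_{\ZZ_N}(G)\ge 1/N$, and moreover $\sigma_{\TT}(G)\ge 1/\chi(G)$ via any proper coloring). Thus the proof reduces to two sentences: invoke Theorem~\ref{thm:eqN} and pass to reciprocals.
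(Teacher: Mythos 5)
Your proposal is correct and is exactly the intended derivation: the paper states Corollary~\ref{cor:eqN} as an immediate consequence of Theorem~\ref{thm:eqN} without spelling out the unwinding, and your two-step argument (identify $\sigma_{\ZZ_N}(G)$ with the supremum of $K/N$ over coloring $\ZZ_N$-bases, then take reciprocals, noting $\sigma_{\TT}(G)>0$) is precisely what the paper leaves implicit.
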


We next turn our attention to a definition through homomorphisms to circulant graphs.
Fix an integer $N$ and a set $S\subseteq\ZZ_N$ such that $S=-S$ and $0\not\in S$;
the \emph{circulant graph} $C(N,S)$ is the graph with vertex set $\ZZ_N$ such that
two vertices $i$ and $j$ of $C(N,S)$ are adjacent if $j-i\in S$.
Observe that the fractional chromatic number and the gyrochromatic number of every circulant graph is the same.
To see this, fix a circulant graph $C(N,S)$.
Since the graph $C(N,S)$ is vertex-transitive,
it holds that $\chi_f(C(N,S))=N/\alpha(C(N,S))$
where $\alpha(C(N,S))$ is the independence number of $C(N,S)$.
On the other hand,
every independent set in the graph $C(N,S)$ is a coloring $\ZZ_N$-base of $C(N,S)$
with the function $f$ defined as $f(x)=x$ for $x\in\ZZ_N$,
which implies that $\chi_g(C(N,S))\le N/\alpha(C(N,S))$ by Theorem~\ref{thm:eqN}.
Hence, it holds that $\chi_f(C(N,S))=\chi_g(C(N,S))=N/\alpha(C(N,S))$.

If $G$ and $H$ are two graphs, we say that $G$ is \emph{homomorphic} to $H$
if there exists a mapping $h:V(G)\to V(H)$ such that
$h(u)h(v)$ is an edge of $H$ for every edge $uv$ of $G$.
The mapping $h$ is a \emph{homomorphism} from $G$ to $H$.
If $G$ is homomorphic to $H$,
we also write $G\to H$ and say that $H$ \emph{admits a homomorphism} from $G$.

\begin{theorem}
\label{thm:eqC}
The gyrochromatic number of a graph $G$ is equal to
\[\inf_{\substack{N\in\NN, S\subseteq\ZZ_N \\ G\to C(N,S)}}\chi_f\left(C(N,S)\right)=
  \inf_{\substack{N\in\NN, S\subseteq\ZZ_N \\ G\to C(N,S)}}\frac{N}{\alpha\left(C(N,S)\right)}\;,\]
i.e., is equal to the infimum of the fractional chromatic numbers of the circulant graphs that admit a homomorphism from $G$.
\end{theorem}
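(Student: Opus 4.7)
The plan is to prove equality by two inequalities, using Corollary~\ref{cor:eqN} to move freely between the gyrochromatic number and coloring $\ZZ_N$-bases. Both directions are short and essentially dual, pivoting on the observation that in a circulant graph $C(N,S)$ the independent sets are precisely the subsets of $\ZZ_N$ whose difference set avoids $S$.

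For the upper bound on the infimum (i.e.\ to show it is at most $\chi_g(G)$), I would start from Corollary~\ref{cor:eqN}: for any $\varepsilon > 0$ pick $N \in \NN$, a set $A \subseteq \ZZ_N$ of size $K$, and a function $f:V(G)\to \ZZ_N$ with $A+f(u)$ and $A+f(v)$ disjoint across every edge $uv$, while $N/K \le \chi_g(G)+\varepsilon$. The disjointness condition is equivalent to saying that for every edge $uv$ the difference $f(v)-f(u)$ lies outside the difference set $A-A := \{a-a' : a,a' \in A\}$. Therefore, defining $S := \ZZ_N \setminus (A-A)$, one checks immediately that $S = -S$ and $0 \notin S$ (since $0 \in A-A$), so $C(N,S)$ is a well-defined circulant graph, and $f$ is a homomorphism from $G$ to $C(N,S)$. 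Finally, $A$ itself is an independent set in $C(N,S)$, because any two distinct elements of $A$ have difference in $A-A$, hence not in $S$. Thus $\alpha(C(N,S)) \ge K$ and $N/\alpha(C(N,S)) \le N/K \le \chi_g(G)+\varepsilon$.

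For the reverse inequality, I would fix any circulant $C(N,S)$ with $G \to C(N,S)$ via some homomorphism $h$, and let $I \subseteq \ZZ_N$ be a maximum independent set of $C(N,S)$. The claim is that $I$ is a coloring $\ZZ_N$-base for $G$ realized by the function $h$: if $uv \in E(G)$ and $I + h(u)$ met $I + h(v)$, there would exist $i_1,i_2 \in I$ with $i_1 - i_2 = h(v) - h(u)$; but $h(v)-h(u) \in S$ since $h$ is a homomorphism, while $i_1 - i_2 \notin S$ by independence of $I$ (including the case $i_1 = i_2$, since $0 \notin S$). From Theorem~\ref{thm:eqN} one then reads off $\sigma_\TT(G) \ge \sigma_{\ZZ_N}(G) \ge |I|/N = \alpha(C(N,S))/N$, i.e.\ $\chi_g(G) \le N/\alpha(C(N,S)) = \chi_f(C(N,S))$, the last equality being the remark made just before the theorem statement.

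There isn't really a main obstacle here: once one spots the correspondence $S \leftrightarrow \ZZ_N \setminus (A-A)$ and independent set $\leftrightarrow$ coloring base, each direction is a one-line verification. The only mild subtlety is remembering to include $0 \in A-A$ so that $0 \notin S$ in the construction, and dually to treat the $i_1 = i_2$ case when checking disjointness in the reverse direction.
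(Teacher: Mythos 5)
Your proof is correct and follows essentially the same two-inequality strategy as the paper, pivoting on Corollary~\ref{cor:eqN} (equivalently, the per-$N$ identity $\sigma_{\ZZ_N}(G)=\max\{\alpha(C(N,S))/N : G\to C(N,S)\}$) and the duality between coloring $\ZZ_N$-bases and independent sets in circulant graphs; the reverse direction is word-for-word the paper's argument. The only difference is cosmetic: in the forward direction the paper takes $S$ to be the set of all differences $\pm\bigl(f(u)-f(v)\bigr)$ over edges $uv$ of $G$ (a minimal valid connection set), whereas you take $S = \ZZ_N \setminus (A-A)$ (the maximal one keeping $A$ independent); both make $f$ a homomorphism and leave $A$ independent, so either choice yields the same bound.
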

\begin{proof}
We will show that the following holds for every $N\in\NN$:
\begin{equation}
\sigma_{\ZZ_N}(G)=\max_{\substack{S\subseteq\ZZ_N \\ G\to C(N,S)}}\frac{\alpha\left(C(N,S)\right)}{N}.\label{eq:eqC}
\end{equation}
Since the graph $C(N,S)$ is vertex transitive for every choice of $S$,
it holds that the fractional chromatic number of $C(N,S)$ is equal to $N/\alpha\left(C(N,S)\right)$.
In particular, the statement of the theorem will follow from \eqref{eq:eqC} and Theorem \ref{thm:eqN}.

Fix $N\in\NN$. We prove the equality in \eqref{eq:eqC} as two inequalities,
starting with showing that
\[\sigma_{\ZZ_N}(G)\le\max_{\substack{S\subseteq\ZZ_N \\ G\to C(N,S)}}\frac{\alpha\left(C(N,S)\right)}{N}.\]
Let $A$ be a coloring $\ZZ_N$-base for $G$ and let $f:V(G)\to\ZZ_N$ be such that
$A+f(u)$ and $A+f(v)$ are disjoint for every edge $uv$ of $G$.
We define $S$ as the set containing all elements $x$ such that
there is an edge, say $uv$, of $G$ and $x$ is equal to $f(u)-f(v)$ or $f(v)-f(u)$.
We claim that the set $A$ is independent in the circulant graph $C(N,S)$:
if $A$ was not independent,
then there would exist $i,j\in A$ such that $i=j+f(v)-f(u)$ for an edge $uv$,
which would imply that the sets $A+f(u)$ and $A+f(v)$ are not disjoint ($i+f(u)$ would be their common element).
Hence, the independence number of $C(N,S)$ is at least $|A|$.
Since it holds that the vertices $f(u)$ and $f(v)$ of $C(N,S)$ are adjacent for every edge $uv$ of $G$,
the mapping $f$ is a homomorphism from $G$ to $C(N,S)$ and the inequality follows.

We next prove that
\[\sigma_{\ZZ_N}(G)\ge\max_{\substack{S\subseteq\ZZ_N \\ G\to C(N,S)}}\frac{\alpha\left(C(N,S)\right)}{N}.\]
Fix a set $S\subseteq\ZZ_N$ such that
there exists a homomorphism $f:V(G)\to\ZZ_N$ from $G$ to the circulant graph $C(N,S)$, and
let $A$ be an independent set of $C(N,S)$ of size $\alpha(C(N,S))$.
We claim that $A$ is a coloring $\ZZ_N$-base for $G$.
To establish this, it is enough to show that $A+f(u)$ and $A+f(v)$ are disjoint for every edge $uv$ of $G$.
Since $A$ is an independent set in $G$, the sets $A$ and $A+x$ are disjoint for every $x\in S$.
Consider an edge $uv$ of $G$.
Since $f$ is a homomorphism from $G$ to $C(N,S)$, it follows that $f(v)-f(u)\in S$,
in particular, the sets $A$ and $A+f(v)-f(u)$ are disjoint.
Hence, the sets $A+f(u)$ and $A+f(v)$ are disjoint.
We conclude that $A$ is a coloring $\ZZ_N$-base.
\end{proof}

We remark that in Section~\ref{sec:torus} we establish a more general statement than that of Theorem~\ref{thm:eqC};
circulant graphs are Cayley graphs of the (abelian) group $\ZZ_N$ and
we will show that the gyrochromatic number of a graph $G$ is equal
to the infimum of the fractional chromatic numbers of Cayley graphs of finite abelian groups that
admit a homomorphism from $G$ (cf.~Corollary~\ref{cor:eqA}).
We remark that the fractional chromatic number of $G$ is equal
to the minimum of the fractional chromatic numbers of Cayley graphs of finite groups that
admit a homomorphism from $G$ (see the discussion after Corollary~\ref{cor:eqA}).

\section{Simple properties}
\label{sec:basic}

In this section, we establish several simple properties of the gyrochromatic number;
some of them will be used within our arguments later in the paper.
We start with two properties related to products of graphs.
Recall that the \emph{Cartesian product} $G\Box H$ of two graphs $G$ and $H$
is the graph with vertex set $V(G)\times V(H)$ such that two vertices $(v,v')$ and $(w,w')$ of $G\Box H$
are adjacent if either $v=w$ and $v'w'$ is an edge of $H$ or $v'=w'$ and $vw$ is an edge of $G$.
Similarly to the chromatic number and the circular chromatic number,
the gyrochromatic number is also preserved by the Cartesian product of a graph with itself.

\begin{proposition}
\label{prop:prod}
For every graph $G$, it holds that $\chi_g(G)=\chi_g(G\Box G)$.
\end{proposition}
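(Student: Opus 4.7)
The plan is to establish the equality via two inequalities, exploiting the abelian group structure of $\TT$ which underlies the definition of $\sigma_\TT$.

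First, I would show $\chi_g(G)\le \chi_g(G\Box G)$ by observing that $G$ embeds into $G\Box G$: for any fixed $v_0\in V(G)$, the map $v\mapsto(v,v_0)$ is an isomorphic embedding of $G$ into $G\Box G$. The gyrochromatic number is clearly monotone under subgraph inclusion, since given any coloring $\TT$-base $A$ for $G\Box G$ with witnessing function $F$, the restriction $f(v):=F(v,v_0)$ shows that the same set $A$ is a coloring $\TT$-base for $G$ with the same measure. Hence $\sigma_\TT(G)\ge\sigma_\TT(G\Box G)$, which is the desired inequality.

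For the reverse inequality $\chi_g(G\Box G)\le \chi_g(G)$, I would use a ``sum of colorings'' trick that adapts smoothly to coloring $\TT$-bases precisely because the defining condition depends only on the differences $f(u)-f(v)$ across edges, and $\TT$ is an abelian group. Given a coloring $\TT$-base $A$ for $G$ with witnessing function $f:V(G)\to \TT$, define $F:V(G\Box G)\to \TT$ by
\[F(u,v)=f(u)+f(v).\]
To verify that $A$ remains a coloring $\TT$-base for $G\Box G$ with function $F$, take any edge of $G\Box G$. By definition it has the form $(u,v)(u,v')$ with $vv'\in E(G)$, or $(u,v)(u',v)$ with $uu'\in E(G)$. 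In the first case, $A+F(u,v)$ and $A+F(u,v')$ are common translates by $f(u)$ of $A+f(v)$ and $A+f(v')$, hence are disjoint because $vv'\in E(G)$ and $A$ is a coloring $\TT$-base for $G$; the second case is symmetric. Passing to the supremum yields $\sigma_\TT(G\Box G)\ge \sigma_\TT(G)$.

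I do not foresee a serious obstacle: the argument is essentially a one-line translation of the standard product-of-colorings construction into the language of Borel coloring bases, and works cleanly because only differences of values of the witnessing function matter, and because a single translate-invariant set $A$ is assigned to all vertices. I note in passing that the analogous statement for the fractional chromatic number would need a different construction, since there each vertex receives its own set, but the restriction to translates that defines $\chi_g$ is exactly what makes the additive trick go through.
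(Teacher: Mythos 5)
Your proof is correct and follows essentially the same approach as the paper: the easy direction via subgraph inclusion, and the reverse direction by defining the witnessing function on $G\Box G$ as the sum $f(u)+f(v)$ and checking disjointness edge by edge using the abelian structure of $\TT$. No issues.
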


\begin{proof}
We show that $A\subseteq\TT$ is a coloring base for $G$ if and only if it is a coloring base for $G\Box G$.
If $A$ is a coloring base for $G\Box G$, then it is clearly a coloring base for $G$.
Hence, we focus on proving the reverse implication.
Let $A$ be a coloring $\TT$-base for $G$ and $f:V(G)\to\TT$ such that $A+f(v)$ and $A+f(w)$ are disjoint for any edge $vw$ of $G$.
We define $g:V(G)^2\to\TT$ by setting $g(v,v')$ to be $f(v)+f(v')$, and
show that if $(v,v')$ and $(w,w')$ are adjacent in $G\Box G$,
then $A+g(v,v')$ and $A+g(w,w')$ are disjoint.

Suppose that $(v,v')$ and $(w,w')$ are adjacent. It holds that either $v=w$ or $v'=w'$.
In the former case, $v'$ and $w'$ are adjacent in $G$ and so $A+f(v')$ and $A+f(w')$ are disjoint.
Consequently, $A+g(v,v')=A+f(v)+f(v')$ and $A+g(w,w')=A+f(w)+f(w')=A+f(v)+f(w')$ are disjoint.
Since the case $v'=w'$ is symmetric,
$A$ is a coloring base for the graph $G\Box G$.
\end{proof}

Proposition~\ref{prop:prod} does not generalize to Cartesian products of distinct graphs,
however the following holds.

\begin{proposition}
\label{prop:prod2}
For any two graphs $G$ and $H$, it holds that $\chi_g(G\Box H)=\chi_g(G\cup H)$,
where $G\cup H$ is the disjoint union of $G$ and $H$.
\end{proposition}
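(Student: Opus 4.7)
The plan is to prove the stronger statement that $\sigma_{\TT}(G\Box H)=\sigma_{\TT}(G\cup H)$ by showing that a Borel set $A\subseteq\TT$ is a coloring $\TT$-base for $G\Box H$ if and only if it is a coloring $\TT$-base for $G\cup H$. The equality of the gyrochromatic numbers then follows from the definition. Both implications mimic the construction used in the proof of Proposition~\ref{prop:prod}: one combines the ``coordinate'' functions additively, while the other extracts them by fixing a vertex in the opposite factor.

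\medskip

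\textbf{From $G\cup H$ to $G\Box H$.} Suppose $A$ is a coloring $\TT$-base for $G\cup H$, witnessed by a function $f:V(G)\cup V(H)\to\TT$; write $f_G$ and $f_H$ for its restrictions. Define $g:V(G)\times V(H)\to\TT$ by $g(v,w)=f_G(v)+f_H(w)$. For an edge of $G\Box H$ we have either $v=w$ (and $v'w'\in E(H)$) or $v'=w'$ (and $vw\in E(G)$); in each case the required disjointness of $A+g(v,v')$ and $A+g(w,w')$ reduces, after cancelling the common summand, to the disjointness guaranteed by $f_H$ or $f_G$ respectively. Hence $A$ is a coloring $\TT$-base for $G\Box H$.

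\medskip

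\textbf{From $G\Box H$ to $G\cup H$.} Suppose $A$ is a coloring $\TT$-base for $G\Box H$, witnessed by $g:V(G)\times V(H)\to\TT$. Fix vertices $w_0\in V(H)$ and $v_0\in V(G)$, and define $f:V(G)\cup V(H)\to\TT$ by $f(v)=g(v,w_0)$ for $v\in V(G)$ and $f(w)=g(v_0,w)$ for $w\in V(H)$ (relabelling if necessary so the two vertex sets are disjoint, since $G\cup H$ is the disjoint union). For any edge $vw$ of $G$ the pairs $(v,w_0)$ and $(w,w_0)$ are adjacent in $G\Box H$, and similarly for edges of $H$ via the ``column'' $\{v_0\}\times V(H)$. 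Since $G\cup H$ has no cross-edges, $A$ together with $f$ is a coloring $\TT$-base for $G\cup H$.

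\medskip

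Combining the two directions shows that the suprema defining $\sigma_{\TT}(G\Box H)$ and $\sigma_{\TT}(G\cup H)$ are taken over the same collection of Borel sets $A$, and so they coincide; taking inverses gives the claimed equality of gyrochromatic numbers. There is no real obstacle beyond bookkeeping: the argument is essentially the same bilinear/projection trick as in Proposition~\ref{prop:prod}, the only subtlety being that the function witnessing the coloring base on $G\cup H$ is obtained by taking a row and a column of the witness on $G\Box H$, which is legitimate precisely because the disjoint union carries no edges between the two parts.
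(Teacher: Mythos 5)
Your proof is correct and follows essentially the same approach as the paper: your first direction is an explicit unfolding of Proposition~\ref{prop:prod} applied to $G\cup H$ and then restricted to the component $G\Box H$ of $(G\cup H)\Box(G\cup H)$ (the same additive construction $g(v,w)=f(v)+f(w)$), and your second direction, taking a row and a column of the witness on $G\Box H$, is precisely the paper's observation that $G$ and $H$ sit as subgraphs inside $G\Box H$.
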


\begin{proof}
Since $G\Box H$ is a component of the graph $(G\cup H)\Box(G\cup H)$,
it holds that $\chi_g(G\Box H)\le\chi_g(G\cup H)$ by Proposition~\ref{prop:prod}.
On the other hand, both $G$ and $H$ are subgraphs of $G\Box H$,
which implies that if $A\subseteq\TT$ is a coloring base for $G\Box H$,
then it is also a coloring base of $G\cup H$.
Indeed,
if $f:V(G\Box H)\to\TT$ is a function such that $A+f(v)$ and $A+f(w)$ are disjoint for any edge $vw$ of $G\Box H$,
then its restriction to the copies of $G$ and $H$ in $G\Box H$ witness that $A$ is a coloring base for $G\cup H$.
Hence, $\chi_g(G\cup H)\le\chi_g(G\Box H)$.
\end{proof}

We derive the following from Proposition~\ref{prop:prod2};
note that a graph $G+H$ in the statement is not uniquely determined by the graphs $G$ and $H$.

\begin{proposition}
\label{prop:identify}
Let $G$ and $H$ be any two graphs and
let $G+H$ be a graph obtained by identifying a vertex of $G$ and a vertex of $H$.
It holds that $\chi_g(G\cup H)=\chi_g(G+H)$.
\end{proposition}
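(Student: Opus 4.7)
The plan is to prove the two inequalities $\chi_g(G\cup H)\le \chi_g(G+H)$ and $\chi_g(G+H)\le \chi_g(G\cup H)$ separately, using Proposition~\ref{prop:prod2} to convert the latter to a statement about $G\Box H$.

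For the first inequality, I would rely on the elementary observation that if $X\to Y$ is a graph homomorphism, then any coloring $\TT$-base $A$ for $Y$, witnessed by a function $f\colon V(Y)\to\TT$, pulls back to a coloring $\TT$-base (of the same measure) for $X$, witnessed by $f\circ h$; hence $\sigma_\TT(X)\ge\sigma_\TT(Y)$ and so $\chi_g(X)\le\chi_g(Y)$. The natural quotient map sending each vertex of $G\cup H$ to its image in $G+H$ (collapsing the identified pair of vertices $v_0\in V(G)$ and $w_0\in V(H)$) is a homomorphism, since edges of $G$ and of $H$ are preserved on both sides. This gives $\chi_g(G\cup H)\le\chi_g(G+H)$.

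For the reverse inequality, by Proposition~\ref{prop:prod2} it suffices to show $\chi_g(G+H)\le \chi_g(G\Box H)$. Let $A\subseteq\TT$ be a coloring $\TT$-base for $G\Box H$, and let $g\colon V(G)\times V(H)\to\TT$ be such that $A+g(v,v')$ and $A+g(w,w')$ are disjoint for every edge $(v,v')(w,w')$ of $G\Box H$. Designate $v_0\in V(G)$ and $w_0\in V(H)$ as the vertices to be identified in $G+H$, and define $f\colon V(G+H)\to\TT$ by
\[ f(v)=g(v,w_0) \text{ for } v\in V(G), \qquad f(w)=g(v_0,w) \text{ for } w\in V(H), \]
with the two definitions agreeing on the identified vertex (both give $g(v_0,w_0)$). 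For any edge $vv'$ of $G$, the vertices $(v,w_0)$ and $(v',w_0)$ are adjacent in $G\Box H$, so $A+f(v)$ and $A+f(v')$ are disjoint; symmetrically for edges of $H$. Since every edge of $G+H$ is an edge of $G$ or of $H$, this shows that $A$ is a coloring $\TT$-base for $G+H$, yielding $\sigma_\TT(G+H)\ge\sigma_\TT(G\Box H)$, i.e.\ $\chi_g(G+H)\le\chi_g(G\Box H)=\chi_g(G\cup H)$.

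Neither direction is truly delicate: the only thing to be careful about is the bookkeeping around the identified vertex, namely checking that the two definitions of $f$ coincide there and that all edges incident to $v_0=w_0$ (whether inherited from $G$ or from $H$) are handled by the corresponding $G$- or $H$-fiber in $G\Box H$. Once that is in place, the proof is essentially immediate given Proposition~\ref{prop:prod2}.
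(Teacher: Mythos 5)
Your proof is correct and follows essentially the same route as the paper: the paper also obtains $\chi_g(G\cup H)\le\chi_g(G+H)$ from the fact that $G$ and $H$ are subgraphs of $G+H$ (equivalently, via the quotient homomorphism you describe), and obtains the reverse inequality by noting that $G+H$ is a subgraph of $G\Box H$ and invoking Proposition~\ref{prop:prod2}. You merely spell out explicitly the embedding of $G+H$ into the ``cross'' $\{(v,w_0)\}\cup\{(v_0,w)\}$ inside $G\Box H$, which the paper leaves implicit.
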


\begin{proof}
Since both $G$ and $H$ are subgraphs of $G+H$, it holds that $\chi_g(G\cup H)\le \chi_g(G+H)$.
On the other hand, $G+H$ is a subgraph of $G\Box H$ and
so it holds that $\chi_g(G+H)\le\chi_g(G\Box H)=\chi_g(G\cup H)$,
where the last equality follows from Proposition~\ref{prop:prod2}.
\end{proof}

In the next two paragraphs, we give an example of two (small) graphs $G$ and $H$ such that
$\chi_g(G\Box H)>\max\{\chi_g(G),\chi_g(H)\}$,
which is equivalent to $\chi_g(G\cup H)>\max\{\chi_g(G),\chi_g(H)\}$ by Proposition~\ref{prop:prod2}.
The graph $G\cup H$ obtained as the disjoint union of the graphs $G$ and $H$ also satisfies that
$\chi_f(G\cup H) < \chi_g(G\cup H) < \chi_c(G\cup H)$.
Hence, a graph $G+H$ obtained by identifying a vertex of $G$ with a vertex of $H$
is an example of a connected graph satisfying that $\chi_f(G+H)<\chi_g(G+H)<\chi_c(G+H)$
by Proposition~\ref{prop:identify}.

Recall that the \emph{lexicographic product} $G[G']$ of graphs $G$ and $G'$
is the graph whose vertex set is $V(G)\times V(G')$ and two vertices $(v,v')$ and $(w,w')$
are adjacent if either $vw$ is an edge of $G$ or $v=w$ and $v'w'$ is an edge of $G'$.
Consider graphs $G=K_5$ and $H=K_2[C_5]$
and note that they both are circulant graphs
(the graph $K_2[C_5]$ is isomorphic to the circulant graph $C(10,S)$ for $S=\left\{1,3,4,5,6,7,9\right\}$),
and therefore their gyrochromatic number
is equal to their fractional chromatic number (see the discussion before Theorem~\ref{thm:eqC}),
which is five for each of them.
Clearly $\chi(G) = \chi_c(G) = 5$.
Moreover, since the complement of $H$ is  the union of two disjoint copies of $C_5$,
we can see that $\chi(H) = 6$, and also (since this complement is therefore disconnected) that
$\chi_c(H) = \chi(H)$ by~\cite[Corollary~3.1]{Zhu01}.
We next observe that $\alpha(G \Box H) \leq 9$:
since $G$ is a clique of size five, the independence number of $G \Box H$ is
the maximum total number of vertices assembled from five
pairwise disjoint independent sets of $H$.
As $\alpha(H)=2$ and $H$ is not $5$-colorable,
the maximum number of vertices of $H$ contained in five (disjoint) independent sets of $H$ is at most $9$.
We conclude that $\alpha(G \Box H) \leq 9$, which implies that $\chi_f(G\Box H) \geq 50/9$.
Since $G\Box H$ is a Cayley graph for the group $\ZZ_2\times\ZZ_5^2$,
it holds that $\chi_f(G\Box H)=\chi_g(G\Box H)=50/9$ by Proposition~\ref{prop:prod2} and Corollary~\ref{cor:eqZ}.
However, since we prove Corollary~\ref{cor:eqZ} only in Section~\ref{sec:torus},
we next provide an argument not depending on Corollary~\ref{cor:eqZ}.

Since $\chi_g(G\Box H) \geq 50/9$,
Proposition~\ref{prop:prod2} yields that
$\chi_g(G\cup H)\geq 50/9>\max\{\chi_g(G),\chi_g(H)\}=\max\{\chi_f(K_5),\chi_f(H)\}=5$.
We next show that $\chi_g(G\cup H)<6$ by establishing that $G\cup H$ admits a $40/7$-gyrocoloring.
The structure of such a coloring is outlined in Figure~\ref{fig:gyrocoloring}.
Namely, consider the set $[0,\frac{1}{2}) \cup [\frac{15}{14},\frac{22}{14})$
and assign its five copies shifted by
$0$, $\frac{1}{2}$, $\frac{29}{14}$, $\frac{36}{14}$ and $\frac{58}{14}$ to the vertices of $G$,
its five copies shifted by $\frac{8i}{7}$ for $i = 0, 1, 2, 3, 4$ to the vertices of one copy of $C_5$ in $H$ and
its five copies shifted by $\frac{8i+4}{7}$ for $i = 0, \dots, 4$ to the vertices of the other copy of $C_5$ in $H$.
Furthermore, recalling the fractional and circular chromatic numbers of 
$G$ and $H$, we get that $\chi_f(G\cup H) = 5$ and $\chi_c(G\cup H) = 6$
which yields that
$\chi_f(G\cup H) < \chi_g(G\cup H) < \chi_c(G\cup H)$.

\begin{figure}
\begin{center}
\includegraphics[scale=0.6]{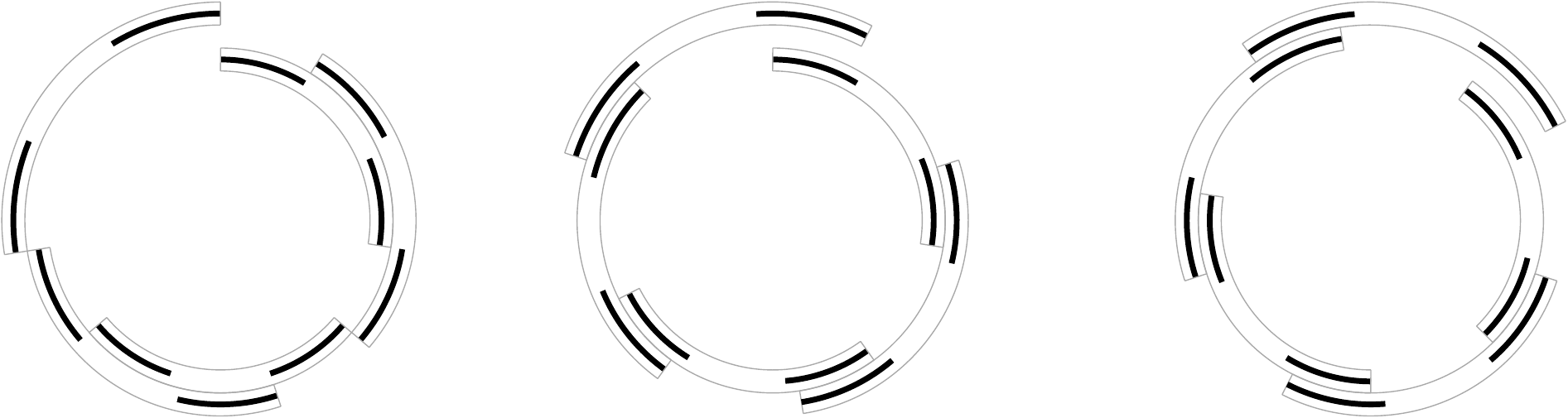}
\end{center}
\caption{Outline of the gyrocoloring of $K_5 \cup K_2[C_5]$.
The five sets depicted on the left-hand side correspond to the vertices of $K_5$
(the sets are pairwise disjoint).
The middle correspond to the vertices of induced $C_5$,
and similarly the right-hand side correspond to the other $C_5$
(the union of the sets in the middle is disjoint from the union of the sets on the right).}
\label{fig:gyrocoloring}
\end{figure}

Considering lexicographic products, Gao and Zhu~\cite{GaoZ96} showed that 
if $\chi_f(G)=\chi_c(G)$, then $\chi_c(G[H])=\chi_c(G)\;\chi(H)$.
In particular, $\chi_c(K_k[H]) = k\;\chi(H)$.
Recall that if $H$ is a circulant graph,
then $K_k[H]$ is also a circulant graph and thus
we get that $\chi_f(K_k[H]) = \chi_g(K_k[H]) = k\;\chi_f(H)$.
Hence, if a circulant graph $H$ satisfies $\chi_f(H)<\chi(H)$,
then $K_k[H]$ is an example of a graph such that the gyrochromatic and circular chromatic numbers differ;
the graph $K_2[C_5]$ discussed above is a particular case of this more general argument.

We conclude this section with a lemma,
which can be used to prove a lower bound on the gyrochromatic number of a graph that
is larger than the fractional chromatic number.
The lemma in particular yields an example of a graph such that
its gyrochromatic number is strictly larger than its fractional chromatic number:
consider the line graph $G$ of the Petersen graph and note that $\omega(G)=\chi_f(G)=3$ and $\chi(G)=4$.

\begin{lemma}
\label{lm:fractclique}
Let $G$ be an $n$-vertex graph. If $\omega(G)<\chi(G)$, then $\chi_g(G)\ge\frac{n}{n-1}\omega(G)$.
\end{lemma}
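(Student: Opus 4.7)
The plan is to proceed by contradiction, using the inequality to deduce a proper $\omega(G)$-coloring of $G$, which would contradict $\omega(G)<\chi(G)$. Write $\omega=\omega(G)$ for brevity. Assuming $\chi_g(G)<\frac{n}{n-1}\omega$, one has $\sigma_{\TT}(G)>\frac{n-1}{n\omega}$, so I can fix a Borel coloring $\TT$-base $A$ for $G$ with $\mu(A)>\frac{n-1}{n\omega}$ together with a witness $f:V(G)\to\TT$ (so that $A+f(u)$ and $A+f(v)$ are disjoint for every edge $uv\in E(G)$).

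Next I fix a clique $K\subseteq V(G)$ with $|K|=\omega$. Since the sets $A+f(v)$ for $v\in K$ are pairwise disjoint, the set $Y:=\TT\setminus\bigcup_{v\in K}(A+f(v))$ has measure $1-\omega\,\mu(A)<1/n$. The key step is a hitting argument: the $n$ Borel sets $f(w)-Y$, $w\in V(G)$, each have measure less than $1/n$, so $\mu\bigl(\bigcup_{w\in V(G)}(f(w)-Y)\bigr)<1$ and this union does not cover $\TT$. Picking any $t^*\in\TT\setminus\bigcup_{w\in V(G)}(f(w)-Y)$, every vertex $w$ satisfies $f(w)\notin Y+t^*$, equivalently $f(w)\in\bigcup_{v\in K}(A+f(v)+t^*)$.

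The final step is to define, for $v\in K$, the set $I_v:=\{w\in V(G):f(w)\in A+f(v)+t^*\}$ and verify that $\{I_v\}_{v\in K}$ is a proper $\omega$-coloring of $G$. For independence of $I_v$: if $u,w\in I_v$ were adjacent in $G$, then $f(u)-f(v)-t^*$ and $f(w)-f(v)-t^*$ both lie in $A$, whence $f(w)-f(u)\in A-A$; but the disjointness of $A+f(u)$ and $A+f(w)$ forces $f(w)-f(u)\notin A-A$, a contradiction. For disjointness of $I_u$ and $I_v$ across distinct $u,v\in K$: any common vertex $w$ would yield $f(w)-t^*\in(A+f(u))\cap(A+f(v))$, which is empty since $uv\in E(G)$. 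Combined with the covering property from the previous paragraph, this delivers a proper $\omega$-coloring, contradicting $\omega<\chi(G)$.

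The trickiest part, I expect, is spotting the hitting argument in the middle paragraph. Merely observing that the $\omega$ disjoint clique translates of $A$ fill most of $\TT$ only yields the trivial bound $\chi_g(G)\ge\omega$; the additional factor $\frac{n}{n-1}$ emerges from noticing that the uncovered complement $Y$ is small enough in measure that, after a single shift $t^*\in\TT$, all $n$ vertex images can simultaneously be forced to land in the shifted clique translates, thereby converting the measure slack into a combinatorial covering.
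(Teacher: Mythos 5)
Your proof is correct, and it rests on the same central construction as the paper's: fix a maximum clique $K$, look at the $|K|$ pairwise disjoint translates $A+f(v)$ for $v\in K$, and associate to a well-chosen shift a family of independent sets indexed by $K$ covering $V(G)$. The two proofs diverge in how the ``good shift'' is produced. The paper defines $V_i(x)$ for every $x\in\TT$, uses non-$\omega$-colorability of $G$ to bound $\sum_i|V_i(x)|\le n-1$ pointwise, and then integrates over $\TT$ to get $\delta\le\frac{n-1}{n\omega}$ directly --- a clean averaging computation with no contrapositive. You instead take the contrapositive route: assume $\mu(A)>\frac{n-1}{n\omega}$, note that the complement $Y$ of the clique translates has measure under $1/n$, apply a union bound over the $n$ sets $f(w)-Y$ to pick $t^*$ with every $f(w)\in\bigcup_{v\in K}(A+f(v)+t^*)$, and check that the resulting partition $\{I_v\}_{v\in K}$ is a proper $\omega$-coloring of $G$, contradicting $\omega(G)<\chi(G)$. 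The two arguments are dual faces of the same first-moment method --- the paper's averaging is your union bound in disguise --- but your version makes the forbidden $\omega$-coloring explicit, while the paper's yields $\delta\le\frac{n-1}{n\omega}$ uniformly for every coloring base without having to posit its negation.
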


\begin{proof}
Let $k=\omega(H)$ and let $v_1,\ldots,v_k$ be vertices of a clique with $k$ vertices in $G$.
Suppose that $A$ is a coloring $\TT$-base for $G$ with measure $\delta>0$.
Finally, let $f:V(G)\to\TT$ be such that $A+f(u)$ and $A+f(v)$ are disjoint for every edge $uv$ of $G$.
In particular, the sets $A+f(v_1),\ldots,A+f(v_k)$ are pairwise disjoint.

For $x\in\TT$ and $i\in [k]$, define $V_i(x)\subseteq V(G)$ as the set of vertices $v$ such that $x+f(v_i)\in A+f(v)$.
Since the element $x+f(v_i)$ is contained in the set $A+f(v)$ for every $v\in V_i(x)$,
each of the sets $V_i(x)$ is an independent set in $G$.
In addition, the sets $V_1(x),\ldots,V_k(x)$ are pairwise disjoint for every $x\in\TT$.
Indeed, if a vertex $v$ were contained in $V_i(x)$ and $V_j(x)$, $i\not=j$, $i,j\in [k]$,
then both $x+f(v_i)$ and $x+f(v_j)$ would be contained in $A+f(v)$;
this would imply that the element $x-f(v)+f(v_i)+f(v_j)$ is contained both in $A+f(v_i)$ and $A+f(v_j)$,
which contradicts that the sets $A+f(v_i)$ and $A+f(v_j)$ are disjoint.

Since the graph $G$ is not $k$-colorable,
the union $V_1(x)\cup\cdots\cup V_k(x)$ does not contain all vertices of $G$ for any $x\in\TT$.
Hence, for every $x\in\TT$ we have 
\[\left|V_1(x)\cup\cdots\cup V_k(x)\right|=\sum_{i=1}^k\left|V_i(x)\right|\le n-1,\]
which implies that $\int_{\TT}\sum_{i=1}^k\left|V_i(x)\right|\dd x\le n-1$. On the other hand it holds that
\[\int_{\TT}\sum_{i=1}^k\left|V_i(x)\right|\dd x=\sum_{v\in V(G)}\sum_{i=1}^k\mu\left(A+f(v)-f(v_i)\right)=kn\delta.\]
It follows that $\delta$ is at most $\frac{n-1}{kn}$.
We conclude that $\sigma_{\TT}(G)\le\frac{n-1}{kn}$,
which yields that the gyrochromatic number of $G$ is at least $\frac{n}{n-1}k$.
\end{proof}

\section{Universality}
\label{sec:torus}

In this section,
we show that a higher-dimensional-torus analogue of the gyrochromatic number
is equal to the (one-dimensional) gyrochromatic number and
use this result to prove a generalization of Theorem~\ref{thm:eqC} to all finite abelian groups.
As the first step towards this result,
we observe that the proof of Theorem~\ref{thm:eqN} readily translates to higher dimensions;
we formulate the corresponding statement as a lemma.

\begin{lemma}
\label{lm:eqd}
Let $d$ be any positive integer. For every graph $G$, it holds that
\[\sigma_{\TT^d}(G)=\sup_{N\in\NN}\sigma_{\ZZ_N^d}(G)=\lim_{N\to\infty}\sigma_{\ZZ_N^d}(G).\]
\end{lemma}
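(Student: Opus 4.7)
The plan is to follow the proof of Theorem~\ref{thm:eqN} almost line by line, replacing the one-dimensional ``few arcs after fattening'' step---which controls the effect of rounding the witness map $f$ to a lattice---by the $L^1$-continuity of translations on $L^1(\TT^d)$, which is available in every dimension. The easy inequality $\sigma_{\TT^d}(G)\ge\sigma_{\ZZ_N^d}(G)$ for every $N$ would go first: given a coloring $\ZZ_N^d$-base $A$ with witness $f$, I would map it to $\TT^d$ via the product embedding
\[\tilde A \;=\; \bigcup_{(x_1,\ldots,x_d)\in A}\;\prod_{i=1}^d\left[\tfrac{x_i-1}{N},\tfrac{x_i}{N}\right)\subseteq\TT^d,\]
which yields a coloring $\TT^d$-base of the same measure with the obvious lifted witness. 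This gives $\sigma_{\TT^d}(G)\ge\sup_N\sigma_{\ZZ_N^d}(G)\ge\limsup_{N\to\infty}\sigma_{\ZZ_N^d}(G)$.

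For the matching lower bound, I would fix $\varepsilon>0$, take a Borel coloring $\TT^d$-base $A$ with $\mu(A)\ge\sigma_{\TT^d}(G)-\varepsilon$ and witness $f:V(G)\to\TT^d$, and use the fact that translations act continuously on $\mathbf{1}_A\in L^1(\TT^d)$ to find $\eta>0$ with $\mu(A\triangle(A+t))<\varepsilon$ for every $t\in\TT^d$ with $\|t\|_\infty<\eta$. For any integer $N$ with $1/N<\eta$, I would define $f':V(G)\to (1/N)\ZZ_N^d\subseteq\TT^d$ by rounding each coordinate of $f(v)$ down to the nearest multiple of $1/N$. Two uses of the triangle inequality for symmetric difference, starting from the disjointness $(A+f(u))\cap(A+f(v))=\emptyset$ on edges $uv$, would give $\mu\bigl((A+f'(u))\cap(A+f'(v))\bigr)<2\varepsilon$ on every edge.

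Then I would mirror the random-shift step of Theorem~\ref{thm:eqN}: pick $z\in\TT^d$ uniformly, let $A(z)\subseteq\ZZ_N^d$ consist of those $i\in\{0,\ldots,N-1\}^d$ for which $z+i/N\in A$ but $z+i/N\notin A+f'(v)-f'(u)$ for every edge $uv$, and apply a union bound:
\[\mathbb{E}\,|A(z)| \;\ge\; N^d\bigl(\mu(A)-2\varepsilon|E(G)|\bigr) \;\ge\; N^d\bigl(\sigma_{\TT^d}(G)-\varepsilon-2\varepsilon|E(G)|\bigr).\]
Fixing $z$ attaining at least the expectation and verifying, exactly as in the one-dimensional case, that $A(z)$ is a coloring $\ZZ_N^d$-base with witness $v\mapsto Nf'(v)\in\ZZ_N^d$ would give $\sigma_{\ZZ_N^d}(G)\ge\sigma_{\TT^d}(G)-\varepsilon-2\varepsilon|E(G)|$ for every sufficiently large $N$. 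Letting $\varepsilon\to 0$ then yields $\liminf_{N\to\infty}\sigma_{\ZZ_N^d}(G)\ge\sigma_{\TT^d}(G)$, which combined with the first paragraph gives all three equalities.

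The main obstacle is recognising that the geometric ``$m/2$ arcs'' step of Theorem~\ref{thm:eqN} has no clean higher-dimensional analogue---a general closed set in $\TT^d$ need not have finite perimeter, even after fattening---so the quantitative bound on the symmetric difference under $1/N$-rounding must be replaced by the qualitative $L^1$-continuity of translations. This sacrifices an explicit dependence of $N$ on $\varepsilon$, but such control is not needed for the limit statement, so the whole argument goes through with essentially the same structure as the one-dimensional proof.
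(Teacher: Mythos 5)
Your proof is correct, and you have correctly identified the one place where the paper's claim that ``the proof of Theorem~\ref{thm:eqN} readily translates to higher dimensions'' actually conceals a non-trivial point. The 1D proof controls the rounding error $\mu\bigl((B_m+f'(u))\cap(B_m+f'(v))\bigr)-\mu\bigl((B_m+f(u))\cap(B_m+f(v))\bigr)$ by counting connected components: a closed set in $\TT$ fattened by $[-1/m,1/m]$ is a union of at most $m/2$ arcs, each with two endpoints. This geometric fact has no clean analogue for a fattened closed set in $\TT^d$, whose boundary can be arbitrarily complicated. Replacing the regularity/closedness/fattening steps entirely with $L^1$-continuity of translations of $\mathbf{1}_A$ is the right move; it is a standard consequence of density of $C(\TT^d)$ in $L^1(\TT^d)$, it makes the remaining random-shift argument go through verbatim, and the loss of an explicit rate $N(\varepsilon)$ is, as you say, irrelevant for a pure limit statement. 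Your version would in fact also streamline the published 1D proof.

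Two small remarks. First, if one insists on a quantitative higher-dimensional analogue of the arc count, one could instead approximate $A$ by a finite union of dyadic boxes $A'$ with $\mu(A\triangle A')<\varepsilon$; a union of $K$ boxes of side $2^{-j}$ satisfies $\mu\bigl((A'+t)\triangle A'\bigr)\le 2dK\,2^{-j(d-1)}\|t\|_\infty$, which recovers an explicit $N(\varepsilon)$ and reduces to the arc count when $d=1$. Second, in your union bound $\mathbb{E}|A(z)|\ge N^d(\mu(A)-2\varepsilon|E(G)|)$ you implicitly also subtract the events $z+i/N\notin A$ via the term $\mu(A)$ rather than $1-\mu(A^c)$; this is fine, just worth stating cleanly as $\mathbb{P}[i\in A(z)]\ge\mu(A)-\sum_{uv\in E(G)}\mu\bigl((A+f'(u))\cap(A+f'(v))\bigr)$ before applying the edge bound.
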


We next state and prove the main theorem of this section.

\begin{theorem}
\label{thm:eqd}
Let $d$ be any positive integer. For every graph $G$, it holds that
\[\sigma_{\TT}(G)=\sigma_{\TT^d}(G).\]
\end{theorem}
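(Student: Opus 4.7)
My plan is to prove $\sigma_{\TT}(G)=\sigma_{\TT^d}(G)$ as two inequalities. The inequality $\sigma_{\TT^d}(G)\geq\sigma_{\TT}(G)$ is immediate: if $A\subseteq\TT$ is a Borel coloring $\TT$-base of measure $s$ witnessed by $f:V(G)\to\TT$, then $A\times\TT^{d-1}\subseteq\TT^d$ is Borel of measure $s$ and is a coloring $\TT^d$-base witnessed by $F(v):=(f(v),0,\ldots,0)$, since $(A\times\TT^{d-1})+F(v)=(A+f(v))\times\TT^{d-1}$ and these products are disjoint for every edge $uv$ of $G$.

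For the reverse inequality I would pass to the discrete setting via Lemma~\ref{lm:eqd} and Theorem~\ref{thm:eqN}, reducing the claim to the following discrete statement: for every $N\in\NN$, every coloring $\ZZ_N^d$-base $A$ witnessed by $F:V(G)\to\ZZ_N^d$, and every integer $K\geq 1$, there is a coloring $\ZZ_M$-base of $G$ of cardinality $K^d|A|$, where $M:=((K+1)N)^d$. Identifying $\ZZ_N^d$ with $\{0,\ldots,N-1\}^d$, I would put $N':=(K+1)N$ and introduce the ``base-$N'$ representation'' map $\iota:\{0,\ldots,N'-1\}^d\to\ZZ_M$ defined by $\iota(b_1,\ldots,b_d):=b_1+N'b_2+\cdots+(N')^{d-1}b_d\bmod M$. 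This $\iota$ is a set-theoretic bijection onto $\ZZ_M$, although it is not a group homomorphism. I would then set $B:=A+N\cdot\{0,\ldots,K-1\}^d\subseteq\{0,\ldots,KN-1\}^d$, which is a disjoint union of $K^d$ translates of $A$ and hence has cardinality $K^d|A|$, and finally define $A'':=\iota(B)\subseteq\ZZ_M$ with witness $F''(v):=\iota(F(v))$.

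The main step, and the principal obstacle, is then to check that $A''$ is a coloring $\ZZ_M$-base with witness $F''$, despite $\iota$ not being a homomorphism; the trick is that the ``padding factor'' $K+1$ in the definition of $N'$ ensures that no carries or wrap-arounds occur in the relevant sums. Concretely, for every $b\in B$ and every $w\in\{0,\ldots,N-1\}^d$, and in particular for $w=F(u)$, the coordinate-wise sum $b+w$ still lies in $\{0,\ldots,N'-1\}^d$, so $\iota(b)+\iota(w)=\iota(b+w)$ in $\ZZ_M$; combined with the injectivity of $\iota$ on $\{0,\ldots,N'-1\}^d$, this reduces disjointness of $A''+F''(u)$ and $A''+F''(v)$ in $\ZZ_M$ to disjointness of $B+F(u)$ and $B+F(v)$ in $\ZZ^d$. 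The latter is immediate: an equality $(a+N\vec{s})+F(u)=(a'+N\vec{s}')+F(v)$ in $\ZZ^d$ with $a,a'\in A$ and $\vec{s},\vec{s}'\in\{0,\ldots,K-1\}^d$ would give $a+F(u)\equiv a'+F(v)\pmod{N}$, contradicting the disjointness of $A+F(u)$ and $A+F(v)$ in $\ZZ_N^d$. A density computation gives $|A''|/M=(K/(K+1))^d\cdot|A|/N^d$, which tends to $|A|/N^d$ as $K\to\infty$, so enlarging $A$ to the tiled set $B$ and leaving slack between $KN$ and $N'$ suffices to make the coloring base property survive while keeping the density loss to a factor approaching $1$; taking a supremum over $N$ then yields $\sigma_{\TT}(G)\ge\sigma_{\TT^d}(G)$.
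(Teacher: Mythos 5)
Your proof is correct, and while it follows the same overall strategy as the paper's proof (pass to the discrete setting via Lemma~\ref{lm:eqd}, tile the coloring $\ZZ_N^d$-base $A$ by a factor of $K^d$ inside a slightly larger box, and then identify that box with a cyclic group, leaving enough slack to avoid wrap-around), the key step is carried out by a genuinely different device. The paper fixes $d$ distinct primes $P_1,\ldots,P_d$ in the interval $\big((k+1)N,(k+2)N\big)$ -- which requires an appeal to the Prime Number Theorem -- embeds the tiled set into $\ZZ_{P_1}\times\cdots\times\ZZ_{P_d}$, and then uses the Chinese Remainder Theorem to obtain an honest group isomorphism onto $\ZZ_M$ with $M=P_1\cdots P_d$; the coloring-base property then transfers with no further work. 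You instead set $M=((K+1)N)^d$ and use the base-$(K+1)N$ digit map $\iota$, which is only a set-theoretic bijection onto $\ZZ_M$ and \emph{not} a homomorphism. You compensate by checking that $\iota$ is nonetheless additive on the sums that arise, because the padding factor $K+1$ ensures that no carries occur: for $b\in B\subseteq\{0,\ldots,KN-1\}^d$ and $w\in\{0,\ldots,N-1\}^d$ every coordinate of $b+w$ stays strictly below $(K+1)N$. Together with the injectivity of $\iota$ this lets you pull the disjointness back to $\ZZ^d$ and then reduce modulo $N$. Your route avoids both the Prime Number Theorem and the Chinese Remainder Theorem, so it is more elementary (and gives the marginally tighter density factor $(K/(K+1))^d$ instead of $(k/(k+2))^d$), at the cost of an explicit ``no-carry'' argument to justify using a map that is not a homomorphism.
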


\begin{proof}
Fix an integer $d\ge 2$ and a graph $G$.
Observe that if $A$ is a coloring $\TT$-base for $G$, then $A\times\TT^{d-1}$ is a coloring $\TT^d$-base for $G$;
this implies that
\[\sigma_{\TT}(G)\le\sigma_{\TT^d}(G).\]
The rest of the proof is devoted to establishing the reverse inequality.
Choose $\varepsilon>0$ arbitrarily and note that by Lemma \ref{lm:eqd}, for all $N\in\NN$ sufficiently large we have 
\begin{equation}
\sigma_{\ZZ_N^d}(G)\ge\sigma_{\TT^d}(G)-\varepsilon.\label{eq:eqd1}
\end{equation}
For any such $N$, since the group $\ZZ_N^d$ is finite, there exists a coloring $\ZZ_N^d$-base $A$ for $G$ with $\sigma_{\ZZ_N^d}(G)\cdot N^d$ elements.
Next choose an integer $k\in\NN$ such that $1-\varepsilon\le\left(\frac{k}{k+2}\right)^d$ and
let $N$ be large enough so that there are at least $d$ distinct primes between $(k+1)N$ and $(k+2)N$ (the Prime Number Theorem implies that this holds for every $N$ sufficiently large);
let $P_1,\ldots,P_d$ be such primes.

Define $A'$ to be the subset of $\ZZ_{P_1}\times\cdots\times\ZZ_{P_d}$ such that
\[A'=\{(x_1+y_1 N,\ldots,x_d+y_d N):(x_1,\ldots,x_d)\in A\mbox{ and }y_1,\ldots,y_d\in [k]\}.\]
Let $f:V(G)\to [N]^d=\ZZ_N^d$ be the function such that $A+f(u)$ and $A+f(v)$ are disjoint for every edge $uv$ of $G$.
Observe that $A'+f(u)$ and $A'+f(v)$ are also disjoint for every edge $uv$ of $G$
when $f(u)$ and $f(v)$ are viewed as elements of $\ZZ_{P_1}\times\cdots\times\ZZ_{P_d}$ (here we use in particular that
each of the primes $P_1,\ldots,P_d$ is at least $(k+1)N$, so that no addition needs to be done modulo $P_i$).
Let $M=P_1P_2\cdots P_d$ and define a mapping $g:\ZZ_M\to\ZZ_{P_1}\times\cdots\times\ZZ_{P_d}$ as
\[g(x)=(x\!\!\mod P_1,\ldots,\; x\!\!\mod P_d).\]
Since $P_1,\ldots,P_d$ are distinct primes,
the mapping $g$ is an isomorphism of the groups $\ZZ_M$ and $\ZZ_{P_1}\times\cdots\times\ZZ_{P_d}$ (by the Chinese Remainder Theorem),
in particular, $g$ is a bijection.

We establish that the set $A''=g^{-1}(A')$ is a coloring $\ZZ_M$-base for $G$.
To do so, consider the function $f'':V(G)\to\ZZ_M$ defined as $f''(v)=g^{-1}(f(v))$, $v\in V(G)$.
Since $g$ is an isomorphism of $\ZZ_M$ and $\ZZ_{P_1}\times\cdots\times\ZZ_{P_d}$,
it holds that $A''+f''(u)$ and $A''+f''(v)$ are disjoint for every edge $uv$ of $G$.
It follows that $A''$ is a coloring $\ZZ_M$-base for $G$,
which implies that
\begin{equation}
\sigma_{\ZZ_M}(G)\ge\frac{|A''|}{M}=\frac{k^d|A|}{M}\ge\frac{k^d|A|}{(k+2)^dN^d}\ge\left(1-\varepsilon\right)\sigma_{\ZZ_N^d}(G).\label{eq:eqd2}
\end{equation}
The inequalities \eqref{eq:eqd1} and \eqref{eq:eqd2} yield that
\[\sigma_{\ZZ_M}(G)\ge \left(1-\varepsilon\right)\left(\sigma_{\TT^d}(G)-\varepsilon\right)\ge\sigma_{\TT^d}(G)-2\varepsilon.\]
Since the choice of $\varepsilon$ was arbitrary,
it follows that $\sigma_{\ZZ_M}(G)$ is at least $\sigma_{\TT^d}(G)$
for every sufficiently large $M$.
Theorem~\ref{thm:eqN} now implies that $\sigma_{\TT}(G)\ge\sigma_{\TT^d}(G)$,
which completes the proof of the theorem.
\end{proof}

Theorem~\ref{thm:eqd} yields the following corollary.

\begin{corollary}
\label{cor:eqZ}
For every graph $G$, it holds that
\[
\sigma_{\TT}(G)=\sup_{Z}\sigma_Z(G)
\]
where the supremum is taken over all finite abelian groups $Z$.
\end{corollary}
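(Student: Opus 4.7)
The plan is to deduce the corollary from Theorem~\ref{thm:eqd} via two inequalities. One direction is immediate: since each $\ZZ_N$ is a finite abelian group, Theorem~\ref{thm:eqN} gives
\[
\sup_Z \sigma_Z(G) \;\geq\; \sup_{N\in\NN}\sigma_{\ZZ_N}(G) \;=\; \sigma_{\TT}(G).
\]
So the substantive task is to show $\sigma_Z(G)\le\sigma_{\TT}(G)$ for an arbitrary finite abelian group $Z$.

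By the structure theorem for finite abelian groups, any such $Z$ is isomorphic to a product $\ZZ_{N_1}\times\cdots\times\ZZ_{N_d}$ for some positive integers $N_1,\ldots,N_d$; fix such an isomorphism. The plan is to mimic the opening move of Theorem~\ref{thm:eqN}: pass from the discrete base to a continuous base by replacing points with ``boxes'' in the torus. Concretely, given a coloring $Z$-base $A\subseteq Z$ of density $|A|/|Z|$ with witnessing function $f:V(G)\to Z$, define
\[
\hat A \;=\; \bigcup_{(a_1,\ldots,a_d)\in A}\; \prod_{i=1}^d \left[\tfrac{a_i}{N_i},\,\tfrac{a_i+1}{N_i}\right) \;\subseteq\; \TT^d,
\]
together with $\hat f(v)=\bigl(f(v)_1/N_1,\ldots,f(v)_d/N_d\bigr)\in\TT^d$. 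Since each component of $\hat f(v)$ is a multiple of $1/N_i$, translation by $\hat f(v)$ permutes the boxes among themselves; therefore $\hat A+\hat f(v)$ is exactly the union of the boxes indexed by $A+f(v)\subseteq Z$. For any edge $uv$, the sets $A+f(u)$ and $A+f(v)$ are disjoint in $Z$, so the corresponding unions of boxes are disjoint in $\TT^d$. Hence $\hat A$ is a coloring $\TT^d$-base of measure $|A|/|Z|$, giving $\sigma_Z(G)\le\sigma_{\TT^d}(G)$.

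Applying Theorem~\ref{thm:eqd}, $\sigma_{\TT^d}(G)=\sigma_{\TT}(G)$, so $\sigma_Z(G)\le\sigma_{\TT}(G)$ for every finite abelian $Z$. Combined with the trivial lower bound above, this yields the claimed equality. The argument involves no serious obstacle: the only point requiring a moment's care is checking that shifting by $\hat f(v)$ respects the box decomposition (guaranteed by choosing the translate to lie on the $1/N_i$-grid), after which the disjointness in $\TT^d$ reduces cleanly to the disjointness in $Z$.
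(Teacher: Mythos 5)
Your proof is correct. The only place where you diverge from the paper is in how you bridge the gap from an arbitrary finite abelian $Z\cong\ZZ_{N_1}\times\cdots\times\ZZ_{N_d}$ to the $d$-torus. The paper takes $N=\operatorname{lcm}(N_1,\ldots,N_d)$, lifts the coloring $Z$-base $A$ to $\ZZ_N^d$ via the preimage under the natural coordinatewise reduction homomorphism $\pi:\ZZ_N^d\to Z$, and then invokes Lemma~\ref{lm:eqd} (the higher-dimensional analogue of Theorem~\ref{thm:eqN}) to get from $\ZZ_N^d$ to $\TT^d$. You instead skip the intermediate discrete group entirely and embed $A$ directly into $\TT^d$ as a union of rectangular boxes of size $N_1^{-1}\times\cdots\times N_d^{-1}$, noting that the grid-aligned translate $\hat f(v)$ permutes these boxes exactly as $f(v)$ permutes elements of $Z$. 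Both routes conclude by applying Theorem~\ref{thm:eqd}. Your version is a slight simplification: you only need the easy direction ``discrete data gives Borel data,'' realized directly, rather than invoking Lemma~\ref{lm:eqd} as a black box; on the other hand, the paper's preimage argument keeps the reasoning entirely in the discrete world until the final appeal to the two general lemmas, which is arguably cleaner modular bookkeeping. Substantively the constructions are equivalent and both are correct.
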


\begin{proof}
Fix a graph $G$.
Since every $\ZZ_N$, $N\in\NN$, is a finite abelian group, 
Theorem~\ref{thm:eqN} yields that the supremum given in the statement is at least $\sigma_{\TT}(G)$.
To establish the corollary, we need to show that $\sigma_{\TT}(G)\ge\sigma_Z(G)$ for every finite abelian group $Z$.

Fix a finite abelian group $Z$;
without loss of generality, we may assume that $Z$ is $\ZZ_{M_1}\times\cdots\times\ZZ_{M_d}$.
Let $A$ be a coloring $Z$-base with $\sigma_Z(G)|Z|$ elements and
let $f:V(G)\to Z$ be a function such that $A+f(u)$ and $A+f(v)$
are disjoint for every edge $uv$ of $G$.
Let $N$ be the least common multiple of $M_1,\ldots, M_d$, and
let $\pi$ denote the natural homomorphism $\ZZ_N^d\to Z$,
defined by $(x_1,\ldots,x_d)\mapsto ( x_1 \mod M_1,\ldots,x_d \mod M_d)$.
It is then easy to see that the preimage $A_N:=\pi^{-1}(A)$ is a coloring $\ZZ_N^d$-base,
with corresponding map $f':V\to \ZZ_N^d$ where for each vertex $v$ we let $f'(v)$ be any preimage of $f(v)$ under $\pi$.
We deduce that $\sigma_{\ZZ_N^d}(G)\geq \sigma_{Z}(G)$.
By Lemma~\ref{lm:eqd} and Theorem~\ref{thm:eqd},
we get that $\sigma_{\TT}(G)=\sigma_{\TT^d}(G)\geq \sigma_{\ZZ_N^d}(G)\geq \sigma_{Z}(G)$, and the result follows. 
\end{proof}

We conclude this section with a generalization of Theorem~\ref{thm:eqC} to finite abelian groups.
Recall that if $Z$ is an abelian group and $S$ is a subset of $Z$ such that $S=-S$ and $0\not\in S$,
the \emph{Cayley graph} $C(Z,S)$ is the graph with vertex set $Z$ such that
two vertices $x$ and $y$ of $C(Z,S)$ are adjacent if $y-x\in S$.
Note that a graph $C(N,S)$ defined earlier for an integer $N$ and a subset $S\subseteq\ZZ_N$
is the same as the Cayley graph $C(\ZZ_N,S)$.

\begin{corollary}
\label{cor:eqA}
The gyrochromatic number of a graph $G$ is equal to
\begin{equation}\label{eq:eqA}
\inf_{\substack{Z, S\subseteq Z \\ G\to C(Z,S)}}\chi_f\left(C(Z,S)\right)=
  \inf_{\substack{Z, S\subseteq Z \\ G\to C(Z,S)}}\frac{|Z|}{\alpha\left(C(Z,S)\right)}\;
\end{equation}
where the infimum is taken over all finite abelian groups $Z$.
\end{corollary}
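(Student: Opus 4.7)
The plan is to carry out, essentially verbatim, the two-inequality argument of Theorem~\ref{thm:eqC}, replacing $\ZZ_N$ by an arbitrary finite abelian group $Z$, and then to take the supremum over $Z$ using Corollary~\ref{cor:eqZ}. In particular, I claim that for every finite abelian group $Z$,
\[
\sigma_Z(G)=\max_{\substack{S\subseteq Z,\ S=-S,\ 0\notin S\\ G\to C(Z,S)}}\frac{\alpha(C(Z,S))}{|Z|}.
\]
Given this, since every Cayley graph $C(Z,S)$ is vertex-transitive, one has $\chi_f(C(Z,S))=|Z|/\alpha(C(Z,S))$, which immediately yields the second equality displayed in \eqref{eq:eqA}. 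Inverting and applying Corollary~\ref{cor:eqZ} then gives
\[
\chi_g(G)=\frac{1}{\sigma_{\TT}(G)}=\frac{1}{\sup_Z \sigma_Z(G)}=\inf_{\substack{Z,S\\ G\to C(Z,S)}}\frac{|Z|}{\alpha(C(Z,S))},
\]
which is \eqref{eq:eqA}.

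To prove the displayed equality for fixed $Z$, I first handle the upper bound $\sigma_Z(G)\le \max_S \alpha(C(Z,S))/|Z|$. Let $A$ be a coloring $Z$-base for $G$ attaining $\sigma_Z(G)$ and let $f:V(G)\to Z$ be the corresponding witness function. Define $S\subseteq Z$ to be the set of all differences $f(u)-f(v)$ and $f(v)-f(u)$ where $uv$ ranges over edges of $G$; then $S=-S$ and $0\notin S$ (since $A+f(u)$ and $A+f(v)$ are disjoint, hence $f(u)\ne f(v)$). By construction, $f$ is a homomorphism $G\to C(Z,S)$, and $A$ is independent in $C(Z,S)$: if $i,j\in A$ with $i-j\in S$, say $i-j=f(u)-f(v)$ for an edge $uv$, then $i+f(v)=j+f(u)\in (A+f(u))\cap(A+f(v))$, contradicting disjointness. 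Hence $\alpha(C(Z,S))/|Z|\ge |A|/|Z|=\sigma_Z(G)$.

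For the reverse inequality, fix $S\subseteq Z$ with a homomorphism $f:G\to C(Z,S)$ and let $A$ be a maximum independent set of $C(Z,S)$. Then $A$ and $A+(f(v)-f(u))$ are disjoint for each edge $uv$ of $G$ (since $f(v)-f(u)\in S$ and $A$ is independent), so $A+f(u)$ and $A+f(v)$ are disjoint, showing that $A$ is a coloring $Z$-base for $G$; thus $\sigma_Z(G)\ge |A|/|Z|=\alpha(C(Z,S))/|Z|$. Taking the maximum over $S$ gives the desired equality, and since the values are rationals with denominator $|Z|$ in a finite set, the maximum is attained.

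There is no real obstacle here: the argument of Theorem~\ref{thm:eqC} uses nothing about $\ZZ_N$ beyond being a finite abelian group, so it transfers without change. The only point worth a moment's care is that in passing from $\sup_Z \sigma_Z(G)$ to an infimum of fractional chromatic numbers, the supremum over $Z$ (which need not be attained) becomes the infimum over $Z,S$ in \eqref{eq:eqA} (likewise not necessarily attained), whereas for each fixed $Z$ the internal maximum over $S$ is attained; this explains why \eqref{eq:eqA} is an infimum rather than a minimum.
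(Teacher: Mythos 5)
Your proposal is correct and follows essentially the same route as the paper's proof: you establish $\sigma_Z(G)=\max_{S}\alpha(C(Z,S))/|Z|$ for each finite abelian group $Z$ by rerunning the two-inequality argument of Theorem~\ref{thm:eqC} verbatim (noting it never used anything special about $\ZZ_N$), invoke vertex-transitivity of Cayley graphs to convert to fractional chromatic numbers, and then pass to the infimum via Corollary~\ref{cor:eqZ}. The paper compresses the first step by simply citing ``the reasoning given in the proof of Theorem~\ref{thm:eqC}'' rather than spelling it out, but the content is the same.
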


\begin{proof}
The reasoning given in the proof of Theorem~\ref{thm:eqC} yields that the following holds for every abelian group $Z$:
\begin{equation}
\sigma_{Z}(G)=\max_{\substack{S\subseteq Z \\ G\to C(Z,S)}}\frac{\alpha\left(C(Z,S)\right)}{|Z|}.\label{eq:eqA1}
\end{equation}
Since every Cayley graph is vertex-transitive, it holds for every abelian group $Z$ and every $S\subseteq Z$:
\begin{equation}
\chi_f\left(C(Z,S)\right)=\max_{\substack{S\subseteq Z \\ G\to C(Z,S)}}\frac{\alpha\left(C(Z,S)\right)}{|Z|}.\label{eq:eqA2}
\end{equation}
The corollary now follows from \eqref{eq:eqA1}, \eqref{eq:eqA2} and Corollary~\ref{cor:eqZ}.
\end{proof}

We remark that if finite abelian groups $Z$ in the infimum in~\eqref{eq:eqA}
are replaced with all finite groups (with generating set $S$
not containing the identity and satisfying $S=S^{-1}$),
then the infimum is equal to the fractional chromatic number and is always attained.
Indeed, it is well-known that the fractional chromatic number of a graph $G$
is equal to the minimum fractional chromatic number of a Kneser graph that admits a homomorphism from $G$.
Sabidussi's theorem~\cite{Sab64} states that every vertex transitive graph (in particular, every Kneser graph)
is a retract of (hence is homomorphically equivalent to) a Cayley graph, cf.~\cite[Theorem 3.1]{HahT97}.
Hence, the gyrochromatic number can be viewed as a variant of the fractional chromatic number restricted to abelian groups.

\section{Relation to circular and fractional colorings}
\label{sec:sandwich}

We now identify graphs such that their gyrochromatic number
is strictly between their fractional and circular chromatic numbers, and
the difference with the latter number can be arbitrarily large.

\begin{theorem}
\label{thm:sandwich}
There exists a sequence of graphs $(G_k)_{k\in\NN\setminus\{1\}}$ such that
$\chi_f(G_k)<\chi_g(G_k)\le\chi_c(G_k)=k+2$ and
\[\lim_{k\to\infty}\chi_g(G_k)=2.\]
\end{theorem}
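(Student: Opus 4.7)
My plan is to exhibit an explicit family of Kneser graphs with the desired parameters. I would take $G_k := K(n_k, t_k)$ with $n_k := 2 t_k + k$, where $t_k$ is chosen to grow sufficiently fast with $k$, to be pinned down below (for instance, $t_k = k^2$).

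First I would compute the circular and fractional chromatic numbers separately. For $\chi_c$, I would invoke Chen's resolution of the Johnson--Holroyd--Stahl conjecture, which implies that $\chi_c(K(n,t)) = \chi(K(n,t)) = n - 2t + 2$; hence $\chi_c(G_k) = k + 2$ as required. For $\chi_f$, the classical identity $\chi_f(K(n,t)) = n/t$ yields $\chi_f(G_k) = 2 + k/t_k$, which tends to $2$ as $t_k \to \infty$. In particular the trivial lower bound $\chi_g \ge \chi_f$ already gives $\liminf_k \chi_g(G_k) \ge 2$.

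For the matching upper bound $\chi_g(G_k) \to 2$, by Corollary~\ref{cor:eqA} it is enough to construct, for each $k$, a homomorphism from $G_k$ to a Cayley graph of a finite abelian group whose fractional chromatic number is within $o(1)$ of $2$. My plan is to encode each $t_k$-subset $A \subseteq [n_k]$ as a sum $f(A) := \sum_{i \in A} c_i \in \ZZ_N^d$ for suitable coefficients $c_1, \ldots, c_{n_k}$ (chosen either randomly or via a pseudorandom design), and to take the connection set $S$ of the target Cayley graph to consist exactly of the elements $f(B) - f(A)$ for disjoint pairs $A, B$. It then remains to show that the resulting abelian Cayley graph has an independent set of density close to $t_k / n_k$, so that its fractional chromatic number is bounded above by $n_k/t_k + o(1) = 2 + k/t_k + o(1)$.

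The main obstacle I foresee is establishing the strict inequality $\chi_f(G_k) < \chi_g(G_k)$. Lemma~\ref{lm:fractclique} does apply to $G_k$, since $\omega(G_k) = 2 < k+2 = \chi(G_k)$, but the bound it produces, $\chi_g(G_k) \ge \frac{2\,|V(G_k)|}{|V(G_k)| - 1}$, sits only infinitesimally above $2$ and is dwarfed by $\chi_f(G_k) = 2 + k/t_k$, so it does not give strictness. A sharper, abelian-specific argument is therefore required: I would try to refine the averaging step in the proof of Lemma~\ref{lm:fractclique} so that it exploits the $S_{n_k}$-symmetry of $K(n_k, t_k)$ and forces every finite abelian Cayley graph admitting a homomorphism from $G_k$ to have fractional chromatic number strictly greater than $n_k/t_k$. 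This is essentially a quantitative version of the principle highlighted after Corollary~\ref{cor:eqA}, namely that restricting from all finite groups to only finite abelian groups must increase the Kneser graph's extremal parameter; making this loss strictly positive (but $o(1)$) is the delicate step.
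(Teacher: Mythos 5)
Your choice of Kneser graphs $K(2t_k+k,\,t_k)$ with $t_k\to\infty$, the identification $\chi_c(K(n,t))=\chi(K(n,t))=n-2t+2$ via the Johnson--Holroyd--Stahl conjecture, the computation $\chi_f = n/t$, and the plan to bound $\chi_g$ from above by homomorphism into an abelian Cayley graph are all aligned with the paper's proof. (The paper's concrete choice is $G_k=K(2k^3+k,k^3)$ and $H_k = C(\ZZ_2^{2k^3+k},S)$ where $S$ consists of vectors with exactly $2k^3$ ones, i.e., your ``sum of coefficients'' encoding with $c_i=e_i$ over $\ZZ_2^{n_k}$; the independent set of vectors with fewer than $k^3$ ones then has density $\tfrac12+o(1)$ by the binomial tail, which pins down the $o(1)$ cleanly without any pseudorandom design.)

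The genuine gap is exactly the one you flag: you do not actually prove $\chi_f(G_k)<\chi_g(G_k)$, and the route you sketch (sharpening the averaging in Lemma~\ref{lm:fractclique} to exploit $S_n$-symmetry) is left entirely unresolved. The paper's strictness argument is different and much more elementary: by Proposition~\ref{prop:prod}, $\chi_g(G_k)=\chi_g(G_k\Box G_k)\ge\chi_f(G_k\Box G_k)$, so it suffices to show $\chi_f(G_k)<\chi_f(G_k\Box G_k)$. Both graphs are vertex-transitive, so this reduces to $\alpha(G_k\Box G_k)<|G_k|\,\alpha(G_k)$. One always has $\alpha(G_k\Box G_k)\le|G_k|\,\alpha(G_k)$, with equality forcing each fiber $I_v=\{w:(v,w)\in I\}$ to be a maximum independent set; adjacent $v,v'$ require $I_v\cap I_{v'}=\emptyset$, but the uniqueness part of the Erd\H{o}s--Ko--Rado theorem says every maximum independent set of $K(n,t)$ (with $n>2t$, $t\ge 2$) is a ``star'' $\{S:x\in S\}$, and any two stars intersect. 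This contradiction gives the strict inequality. Without this (or some substitute), your proposal does not establish $\chi_f<\chi_g$, which is the most substantive claim in the theorem.
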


\begin{proof}
We set $G_k$ to be the Kneser graph $K(2k^3+k,k^3)$.
Since the circular chromatic number and the chromatic number coincide for Kneser graphs~\cite{ChaLZ13,Che11,LiuZ16},
it follows that the circular chromatic number $\chi_c(G_k)$ is equal to $k+2$.
Recall that the fractional chromatic number of $G_k$ is $\frac{2k^3+k}{k^3}=2+k^{-2}$,
in particular, the limit of the fractional chromatic numbers of $G_k$ is two.

We next show that the gyrochromatic numbers of the graphs $G_k$ converge and
their limit is also two.
Since it holds that $\chi_f(G_k)\le\chi_g(G_k)$ for every graph $G_k$,
the limit (assuming that it exists) must be at least two.
For the upper bound,
consider the Cayley graph $H_k=C(\ZZ_2^{2k^3+k},S)$ with the generating set $S$
consisting of those $x\in\ZZ_2^{2k^3+k}$ that have exactly $2k^3$ entries equal to one.
The graph $H_k$ admits a homomorphism from $G_k$: indeed,
each vertex of $K(2k^3+k,k^3)$ corresponds to a $k^3$-element subset of $[2k^3+k]$ and
we map it to the characteristic vector of this set.
This mapping is indeed a homomorphism from $G_k$ to $H_k$
since two vertices of $G_k=K(2k^3+k,k^3)$ are adjacent if and only if
their corresponding sets are disjoint,
which happens if and only if the difference of their characteristic vectors (modulo two)
has exactly $2k^3$ entries equal to one.
Corollary~\ref{cor:eqA} implies that
\[\limsup_{k\to\infty}\chi_g(G_k)\le\limsup_{k\to\infty}\frac{|H_k|}{\alpha(H_k)}.\]
We next show that the right limit is at most two.

Let $I_k\subseteq V(H_k)$ be the set of those elements of $H_k$ with fewer than $k^3$ entries equal to one.
Since $I_k$ is an independent set in $H_k$ and
it holds that $\binom{n}{m}\le 2^{n}/\Theta(n^{1/2})$ for $0\le m\le n$,
we obtain that
\[\alpha(H_k)\ge |I_k| = \sum_{i=0}^{k^3-1}\binom{2k^3+k}{i}=2^{2k^3+k} \cdot \left( \frac{1}{2} + o(1)\right),\]
which implies that
\[\limsup_{k\to\infty}\frac{|H_k|}{\alpha(H_k)}\le\lim_{k\to\infty}\frac{2^{2k^3+k}}{|I_k|}=2.\]
We conclude that the sequence $\chi_g(G_k)$ converges and its limit is equal to two.

To complete the proof of the theorem, we need to show that $\chi_f(G_k)<\chi_g(G_k)$.
To do so, it suffices to show that $\chi_f(G_k)<\chi_f(G_k\Box G_k)$. Indeed, if we show this, then by Proposition~\ref{prop:prod} and
the fact that the fractional chromatic number of every graph
is at most its gyrochromatic number, we will have $\chi_f(G_k)< \chi_f(G_k\Box G_k)\le\chi_g(G_k\Box G_k)=\chi_g(G_k)$, as required. 

Since the graphs $G_k$ and $G_k\Box G_k$ are vertex-transitive, it holds that
\[\chi_f(G_k)=\frac{|G_k|}{\alpha(G_k)}\qquad\mbox{and}\qquad\chi_f(G_k\Box G_k)=\frac{|G_k\Box G_k|}{\alpha(G_k\Box G_k)}=\frac{|G_k|^2}{\alpha(G_k\Box G_k)}.\]
Hence, it is enough to show that $\alpha(G_k\Box G_k)<|G_k|\alpha(G_k)$.
The Erd\H{o}s-Ko-Rado Theorem yields that $\alpha(G_k)=\binom{2k^3+k-1}{k^3-1}$ and
that for every independent set of $G_k$ of this size, there exists $x\in [2k^3+k]$ such that
the vertices of the independent set
correspond to the $\binom{2k^3+k-1}{k^3-1}$ $k^3$-element subsets of $[2k^3+k]$ containing $x$.
In particular,
any two independent sets of the maximum cardinality in $G_k$
have a non-empty intersection (here we use that $k\ge 2$).

Suppose that the graph $G_k\Box G_k$ has an independent set of size $|G_k|\alpha(G_k)$, and
let $I$ be such an independent set.
For every vertex $v$ of $G_k$, the set $I_v=\{w:(v,w)\in I\}$ is an independent set in $G_k$ (by definition of the Cartesian product).
Since the set $I$ contains $|G_k|\alpha(G_k)$ elements and $|I_v|\le\alpha(G_k)$ for every vertex $v\in V(G_k)$,
we obtain that $|I_v|=\alpha(G_k)$ for every $v\in V(G_k)$.
Let $v$ and $v'$ be two adjacent vertices of $G_k$.
It follows from the previous paragraph that $I_v$ and $I_v'$ have a common vertex $w$, i.e., the set $I$ contains both $(v,w)$ and $(v',w)$,
which contradicts that $I$ is an independent set.
Hence, the graph $G_k\Box G_k$ has no independent set of size $|G_k|\alpha(G_k)$,
i.e. $\alpha(G_k\Box G_k)<|G_k|\alpha(G_k)$.
This finishes the proof of the theorem.
\end{proof}

Theorem~\ref{thm:sandwich} immediately implies that
the gap between the gyrochromatic number and the circular chromatic number
can be arbitrarily large and
there exists a graph such that
its gyrochromatic number is strictly between its fractional and its circular chromatic numbers.

\begin{corollary}
\label{cor:sandwich}
For every $k\in\NN$, there exists a graph $G$ such that $\chi_f(G)<\chi_g(G)\le\chi_c(G)-k$.
\end{corollary}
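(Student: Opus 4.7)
The plan is to derive Corollary~\ref{cor:sandwich} directly from Theorem~\ref{thm:sandwich}, which has already done all the work; only a choice of index in the sequence $(G_k)_{k \in \NN\setminus\{1\}}$ is needed. Given $k \in \NN$, I would set $G = G_m$ for an integer $m$ chosen sufficiently large in terms of $k$, and verify the two required inequalities separately.

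The strict inequality $\chi_f(G) < \chi_g(G)$ is immediate: by Theorem~\ref{thm:sandwich}, every graph $G_m$ in the sequence already satisfies $\chi_f(G_m) < \chi_g(G_m)$, so this holds regardless of which index $m$ we pick.

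For the inequality $\chi_g(G) \le \chi_c(G) - k$, I would use the two quantitative facts supplied by Theorem~\ref{thm:sandwich}: namely $\chi_c(G_m) = m+2$ for every $m$, and $\lim_{m \to \infty} \chi_g(G_m) = 2$. The latter convergence means that, fixing any small $\varepsilon > 0$ (say $\varepsilon = 1$), there is some threshold $m_0$ such that $\chi_g(G_m) \le 3$ for all $m \ge m_0$. Choosing $m \ge \max\{m_0,\, k+1\}$ then yields
\[
\chi_c(G_m) - \chi_g(G_m) \ge (m+2) - 3 = m - 1 \ge k,
\]
which is exactly the required bound.

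There is no real obstacle here; the corollary is a packaging of the theorem that exhibits a graph whose gap between $\chi_c$ and $\chi_g$ exceeds a prescribed constant $k$, together with the strict gap $\chi_f < \chi_g$. Combining the two displays gives $\chi_f(G) < \chi_g(G) \le \chi_c(G) - k$, so choosing $G = G_m$ with $m$ as above completes the proof.
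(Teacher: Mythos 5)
Your proof is correct and is exactly the expected elaboration of the paper's one-line remark that Theorem~\ref{thm:sandwich} ``immediately implies'' the corollary: you pick $m$ large enough so that $\chi_g(G_m)$ is within a fixed bound of its limit $2$ while $\chi_c(G_m) = m+2$ grows, and the strict inequality $\chi_f < \chi_g$ is inherited directly from the theorem.
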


\section{Existence of optimal gyrocoloring}
\label{sec:attain}

In this section, we establish the existence of a graph $G$ such that
there is no coloring $\TT$-base for $G$ of measure $\sigma_{\TT}(G)$,
i.e., the supremum in \eqref{eq:sigma} is not attained.
In other words, there exists a graph $G$ with no $\chi_g(G)$-gyrocoloring.

Let $G_5$ be the graph with vertex set $\ZZ_5^2$ and
two vertices $(i,j)$ and $(i',j')$ adjacent if $i'-i\in\{2,3\}$ or $j'-j\in\{2,3\}$ (calculations modulo five). In other words, $G_5$ is the Cayley graph on $\ZZ_5^2$ with generating set $\ZZ_5^2\setminus \{-1,0,1\}^2$. Proposition~\ref{prop:attain2} and Theorem~\ref{thm:attain}, which we will prove in this section,
yield that $G_5$ has no coloring $\TT$-base with measure $\sigma_{\TT}(G_5)$.

We begin by analyzing the structure of independent sets of $G_5$.

\begin{proposition}
\label{prop:attain1}
The independence number of $G_5$ is four and
the only independent sets of size four are the following 25 sets:
\[I_v=\{v,v+(0,1),v+(1,0),v+(1,1)\}\]
where $v\in\ZZ_5^2$.
\end{proposition}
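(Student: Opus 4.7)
The plan is to analyze independent sets via their coordinate projections. First I would reformulate the adjacency condition as its negation: two vertices $(i,j)$ and $(i',j')$ of $G_5$ are non-adjacent precisely when $i-i'\in\{-1,0,1\}$ \emph{and} $j-j'\in\{-1,0,1\}$, viewing differences in $\ZZ_5$ via the representatives $\{-2,-1,0,1,2\}$; this is immediate from $\ZZ_5\setminus\{2,3\}=\{-1,0,1\}$.

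Next I would verify that each $I_v$ is independent. For any two distinct points of $I_v=v+\{0,1\}^2$, both coordinates of their difference lie in $\{-1,0,1\}$, so they are non-adjacent by the criterion above; this gives $25$ independent sets of size $4$ (one for each choice of $v\in\ZZ_5^2$).

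The main step is the matching upper bound together with uniqueness. Let $I$ be an independent set, and let $S_1,S_2\subseteq\ZZ_5$ be the sets of first and second coordinates appearing among vertices of $I$. For any two values $a,b\in S_1$, picking any vertices of $I$ having those first coordinates and invoking non-adjacency yields $a-b\in\{-1,0,1\}$; the analogous statement holds for $S_2$. I claim this forces $|S_1|,|S_2|\le 2$. Indeed, if $S_1$ contained three distinct elements $a,b,c$, then $a-b$ and $b-c$ would each lie in $\{\pm1\}$, so $a-c=(a-b)+(b-c)\in\{-2,0,2\}$; distinctness rules out $a-c=0$, so $a-c\in\{\pm2\}$, but $\{\pm2\}=\{2,3\}$ in $\ZZ_5$, contradicting $a-c\in\{-1,0,1\}$. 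Moreover, if $|S_1|=2$ then its two elements must differ by $\pm1$, so $S_1=\{a,a+1\}$ for some $a\in\ZZ_5$, and similarly for $S_2$.

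Combining these facts yields $|I|\le|S_1|\cdot|S_2|\le 4$, and equality forces $|S_1|=|S_2|=2$ together with $I=S_1\times S_2=\{a,a+1\}\times\{b,b+1\}=I_{(a,b)}$, giving exactly the $25$ extremal sets claimed. The only mildly delicate point will be the $\ZZ_5$-arithmetic step that rules out $|S_i|=3$, where one has to be careful that distinctness of $a,b,c$ eliminates the sum $a-c=0$; apart from this, the proof reduces to a straightforward case analysis on the projection sizes.
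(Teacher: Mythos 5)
Your proof is correct and takes a genuinely different route from the paper. The paper's argument is local: it fixes an arbitrary vertex $(i,j)$ of an independent set, observes that the eight non-neighbors of $(i,j)$ split into three cliques (the horizontal pair $(i\pm1,j)$, the vertical pair $(i,j\pm1)$, and the four diagonals $(i\pm1,j\pm1)$), so at most one vertex from each clique can accompany $(i,j)$, giving $|X|\le 4$; equality is then traced through a short case analysis (using symmetry) to pin down $X$ as one of the $I_v$. Your argument is instead a projection argument: you show that each coordinate projection $S_1,S_2\subseteq\ZZ_5$ of an independent set has size at most $2$ via a triangle-type contradiction (three elements of $\ZZ_5$ cannot be pairwise at distance $\le 1$), then close with the product bound $|I|\le|S_1||S_2|\le4$ and read off $I=S_1\times S_2=\{a,a+1\}\times\{b,b+1\}$ directly in the equality case. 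The two proofs have similar length, but the projection argument avoids the case analysis on which non-neighbors are chosen and makes the extremal structure immediate as a Cartesian product; the paper's vertex-centred argument is perhaps easier to discover directly from the definition of $G_5$.
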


\begin{proof}
Let $X$ be an independent set in $G_5$ and let $(i,j)\in\ZZ_5^2$ be any vertex of $G_5$ contained in $X$.
Observe that the set $X$ contains in addition to the vertex $(i,j)$
at most one of the vertices $(i-1,j)$ and $(i+1,j)$,
at most one of the vertices $(i,j-1)$ and $(i,j+1)$,
at most one of the vertices $(i-1,j-1)$, $(i-1,j+1)$, $(i+1,j-1)$ and $(i+1,j+1)$, and
no other vertex.
In particular, the set $X$ has size at most four.
If the set $X$ has size four, we can assume by symmetry that
$X$ contains the vertex $(i+1,j)$ from the first pair and the vertex $(i,j+1)$ from the second pair.
If $X$ contains the vertices $(i,j)$, $(i+1,j)$ and $(i,j+1)$,
then the only vertex from the last quadruple that it can contain is $(i+1,j+1)$.
The statement of the proposition now follows.
\end{proof}

We next compute $\sigma_{\TT}(G_5)$.
We remark that the construction of the coloring $\ZZ_5^2$-base for $G_5$ presented in the proof of the next proposition
yields a coloring $\TT^2$-base with measure $4/25$.

\begin{proposition}
\label{prop:attain2}
It holds that $\sigma_{\TT}(G_5)=4/25$.
\end{proposition}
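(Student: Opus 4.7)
The plan is to sandwich $\sigma_{\TT}(G_5)$ between matching bounds $4/25\le\sigma_{\TT}(G_5)\le 4/25$ using the infrastructure already developed in the paper.

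For the lower bound $\sigma_{\TT}(G_5)\ge 4/25$, I would first exhibit a coloring $\ZZ_5^2$-base for $G_5$ of size four, and then invoke Theorem~\ref{thm:eqd} to transfer the bound to $\TT$. The natural candidate is $A=I_0=\{(0,0),(0,1),(1,0),(1,1)\}$, which is independent in $G_5$ by Proposition~\ref{prop:attain1}. Taking $f:V(G_5)\to\ZZ_5^2$ to be the identity, the required disjointness of $A+u$ and $A+v$ for every edge $uv$ is equivalent to $(A-A)\cap S=\emptyset$, where $S=\ZZ_5^2\setminus\{-1,0,1\}^2$ is the connection set of $G_5$; this follows immediately from $A-A\subseteq\{-1,0,1\}^2$. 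Thus $\sigma_{\ZZ_5^2}(G_5)\ge 4/25$. The standard lift described in the proof of Lemma~\ref{lm:eqd} (replacing each $x\in A$ by the square $\prod_{i=1}^2[(x_i-1)/5,x_i/5)$) then yields a coloring $\TT^2$-base of measure $4/25$, so $\sigma_{\TT^2}(G_5)\ge 4/25$, and Theorem~\ref{thm:eqd} gives $\sigma_{\TT}(G_5)=\sigma_{\TT^2}(G_5)\ge 4/25$.

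For the matching upper bound $\sigma_{\TT}(G_5)\le 4/25$, I would use vertex-transitivity: since $G_5$ is a Cayley graph on $\ZZ_5^2$ it is vertex-transitive, and combined with $\alpha(G_5)=4$ and $|V(G_5)|=25$ from Proposition~\ref{prop:attain1} this gives $\chi_f(G_5)=|V(G_5)|/\alpha(G_5)=25/4$. The basic inequality $\chi_f\le\chi_g$ recorded in~\eqref{eq:basicineqs} then yields $\sigma_{\TT}(G_5)=1/\chi_g(G_5)\le 1/\chi_f(G_5)=4/25$.

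The argument presents no substantive obstacle; it reduces to identifying a maximum independent set and invoking vertex-transitivity, with all the heavy lifting already done by Proposition~\ref{prop:attain1} and the universality result Theorem~\ref{thm:eqd}. The one minor design choice is whether to reach $\sigma_{\TT}$ from $\sigma_{\ZZ_5^2}$ via the two-step route $\ZZ_5^2\to\TT^2\to\TT$ hinted at in the author's remark before the proposition, or to invoke Corollary~\ref{cor:eqZ} in a single step; both routes yield the same bound.
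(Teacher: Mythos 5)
Your proposal is correct and matches the paper's argument essentially step for step: the upper bound comes from vertex-transitivity together with $\alpha(G_5)=4$ to get $\chi_f(G_5)=25/4$, and the lower bound from verifying that $A=\{(0,0),(0,1),(1,0),(1,1)\}$ is a coloring $\ZZ_5^2$-base and transferring to $\TT$. The paper invokes Corollary~\ref{cor:eqZ} for the transfer rather than the two-step route through Lemma~\ref{lm:eqd} and Theorem~\ref{thm:eqd}, but as you observe these are interchangeable here.
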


\begin{proof}
Since $\alpha(G_5)=4$ and $G_5$ is vertex-transitive, it follows that $\chi_f(G_5)=25/4$,
which implies that $\sigma_{\TT}(G_5)\le 4/25$.
On the other hand, the set $A=\{(0,0),(0,1),(1,0),(1,1)\}$ is a coloring $\ZZ_5^2$-base for $G_5$:
indeed, the sets $A+v$ and $A+w$ are disjoint for every edge $vw$ of $G_5$.
Hence, $\sigma_{\ZZ_5^2}(G_5)\ge 4/25$, which yields that $\sigma_{\TT}(G_5)\ge 4/25$ by Corollary~\ref{cor:eqZ}.
\end{proof}

\noindent We next show that every coloring $\TT$-base of $G_5$ of measure $4/25$
must have a very particular structure.
In what follows, we will write $X\cong Y$ if two sets $X$ and $Y$ differ on a null set.

\begin{lemma}
\label{lm:weights}
Let $A\subseteq\TT$ be a coloring $\TT$-base of the graph $G_5$ and
let $f:V(G_5)\to\TT$ be such that $A+f(v)$ and $A+f(w)$ are disjoint for every edge $vw$;
let $B_v$, $v\in V(G_5)$ be the set $A+f(v)$.
If the measure of $A$ is $4/25$,
then there exist disjoint measurable subsets $C_{i,j}$, $(i,j)\in\ZZ_5^2$, such that
\[B_{i,j}\cong C_{i,j}\cup C_{i+1,j}\cup C_{i,j+1}\cup C_{i+1,j+1}\]
for every $(i,j)\in\ZZ_5^2$, and
the measure of each set $C_{i,j}$, $(i,j)\in\ZZ_5^2$, is $1/25$.
\end{lemma}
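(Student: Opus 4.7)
The plan is to extract the sets $C_{i,j}$ from a pointwise decomposition of $\TT$ according to which $B_v$ contain a given point. For $x\in\TT$, define
\[
S(x) := \{v \in V(G_5) : x \in B_v\}.
\]
Since $B_v \cap B_w = \emptyset$ whenever $vw \in E(G_5)$, the set $S(x)$ is independent in $G_5$, and hence $|S(x)| \le \alpha(G_5) = 4$ by Proposition~\ref{prop:attain1}. Integrating this bound via Fubini gives
\[
\int_{\TT} |S(x)| \dd x \;=\; \sum_{v\in V(G_5)} \mu(B_v) \;=\; 25 \cdot \tfrac{4}{25} \;=\; 4,
\]
so in fact $|S(x)| = 4$ for $\mu$-almost every $x$. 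Invoking Proposition~\ref{prop:attain1} a second time, for a.e.\ $x$ there is then a unique $v(x) \in \ZZ_5^2$ with $S(x) = I_{v(x)}$.

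Next I would set
\[
C_{i,j} := \{x \in \TT : v(x) = (i-1,j-1)\},
\]
ignoring the null set where $v(x)$ is undefined. These sets are pairwise disjoint by construction, and their union has full measure in $\TT$. The identity of the lemma is then a direct unwinding: for a.e.\ $x$ we have $x \in B_{i,j}$ iff $(i,j) \in S(x) = I_{v(x)}$, equivalently $v(x) \in \{(i,j),(i-1,j),(i,j-1),(i-1,j-1)\}$, and translating via the definition of $C_{i,j}$ this is exactly the condition $x \in C_{i,j} \cup C_{i+1,j} \cup C_{i,j+1} \cup C_{i+1,j+1}$.

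It remains to prove $\mu(C_{i,j}) = 1/25$ for every $(i,j)$. Writing $a_{i,j} := \mu(C_{i,j})$, the union identity above yields the $25$-equation linear system
\[
a_{i,j} + a_{i+1,j} + a_{i,j+1} + a_{i+1,j+1} \;=\; \mu(B_{i,j}) \;=\; \mu(A) \;=\; \tfrac{4}{25} \qquad \text{for all } (i,j) \in \ZZ_5^2.
\]
This is a translation-invariant (convolution) equation on $\ZZ_5^2$, whose kernel is the $2\times 2$ indicator; its Fourier transform at the character $(\xi_1,\xi_2)\in \ZZ_5^2$ equals $(1+\omega^{\xi_1})(1+\omega^{\xi_2})$ with $\omega = e^{2\pi i/5}$. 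Since $-1$ is not a fifth root of unity, this symbol is nonzero on all of $\ZZ_5^2$, so the system has a unique solution, which must be the constant $a_{i,j} \equiv 1/25$.

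The main obstacle is this last step: the pointwise analysis is essentially forced by $\alpha(G_5) = 4$ together with the rigid classification of maximum independent sets in Proposition~\ref{prop:attain1}, but the equidistribution of measure among the $C_{i,j}$ is a genuine constraint that relies on the oddness of $5$ through the nonvanishing of the $2\times 2$ kernel's Fourier transform on $\ZZ_5^2$.
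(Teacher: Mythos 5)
Your proof is correct and your decomposition is essentially the paper's: your sets $C_{i,j}=\{x: v(x)=(i-1,j-1)\}$ coincide (up to a null set) with the paper's $D_{I_{(i-1,j-1)}}$, and the argument that almost every $x$ lies in exactly four of the $B_v$, hence in a unique maximum independent set $I_{v(x)}$, is the same as the paper's observation that $\mu(D_I)=0$ whenever $|I|<4$. The one place you diverge is the final step, establishing $\mu(C_{i,j})\equiv 1/25$. Both you and the paper reduce this to the invertibility of the $25\times 25$ matrix $M$ of the constraint $a_{i,j}+a_{i+1,j}+a_{i,j+1}+a_{i+1,j+1}=4/25$ on $\ZZ_5^2$, but you prove invertibility by diagonalizing $M$ via characters of $\ZZ_5^2$: the symbol $(1+\omega^{\xi_1})(1+\omega^{\xi_2})$ with $\omega=e^{2\pi i/5}$ never vanishes because $-1$ is not a fifth root of unity. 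The paper instead solves $Mx=0$ by an explicit hand computation on the $5\times 5$ grid (filling in the table row by row and deriving $\alpha=\beta_1=\cdots=0$). Your Fourier-analytic argument is shorter, exposes cleanly that the invertibility hinges on the oddness of $5$, and would generalize immediately to $\ZZ_n^2$ for any odd $n$; the paper's is more elementary and self-contained. Both are valid.
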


\begin{proof}
Let $\II$ be the set containing all independent sets of vertices of $G_5$.
For each $I\in\II$, we define the following measurable subset of $\TT$:
\[
D_I= \{x\in\TT: x\in A+f(v)\textrm{ if and only if } v\in I\},
\]
i.e., $D_I$ contains those $x\in\TT$ that
are contained in the sets $A+f(v)$ for $v\in I$ and
are not contained in the sets $A+f(v)$ for $v\not\in I$.
Observe that every $x\in \TT$ belongs to $D_I$ for some $I\in\II$:
indeed, the set containing the vertices $v$ such that $x\in A+f(v)$ is independent (note that $\emptyset \in\II$).
Hence, the sets $D_I$, $I\in\II$, partition $\TT$. We also have from the definition of $D_I$ that
\begin{equation}\label{eq:basepart}
A+f(v)=\bigcup_{I\in\II: I\ni v} D_I,
\end{equation}
and it follows that $\sum_{I\in\II}|I|\cdot\mu(D_I)=\sum_{v\in V(G_5)}\mu(A+f(v))$. On the other hand, since the measure of $A$ is $4/25$, we have $\sum_{v\in V(G_5)}\mu(A+f(v))=4$. Hence, using Proposition \ref{prop:attain1}, we have
\[
4 = \sum_{I\in\II}|I|\cdot\mu(D_I) = 4 \sum_{I\in\II:|I|=4} \mu(D_I) + \sum_{I\in\II:|I|< 4} |I|\cdot \mu(D_I).
\]
This implies that $\mu(D_I)=0$ for all $I\in\II$ with $|I|<4$ (using $\sum_{I\in\II}\mu(D_I) = 1$). Setting \[C_{i,j}=D_{\{(i-1,j-1),(i-1,j),(i,j-1),(i,j)\}},\]
we have by \eqref{eq:basepart} that $B_{i,j} \cong C_{i,j}\cup C_{i+1,j}\cup C_{i,j+1}\cup C_{i+1,j+1}$ for every $(i,j)\in\ZZ_5^2$.
To complete the proof, we need to show each set $C_{i,j}$ has measure $1/25$. 

Recalling the notation $I_v$ from Proposition \ref{prop:attain1},
let $M$ be the matrix with rows and columns indexed by the elements of $\ZZ_5^2$ such that
$M_{(i,j),(i',j')}$ is one if $(i,j)\in I_{(i',j')}$ and zero otherwise.
Further, let $x$ be the vector with entries indexed by the elements of $\ZZ_5^2$ such that
$x_{(i',j')}$ is the measure of $D_{I_{(i',j')}}$.
Observe (using \eqref{eq:basepart} and that $\mu(D_I)=0$ for $|I|<4$) that $Mx$ is the vector with all entries equal to $4/25$.
We next show that the matrix $M$ is invertible.
This would imply that the vector $x$ with all entries equal to $1/25$ is the only vector such that
$Mx$ is the vector with all entries equal to $4/25$, which would complete the proof.

Assume that the matrix $M$ is singular,
i.e., there exists a non-zero vector $x$ such that $Mx$ is the zero vector.
The entries of $x$ can be interpreted as numbers on the toroidal $5\times 5$ grid such that
each of the $25$ quadruples of entries forming a square sums to zero;
a ``square" stands here for a translate of $\{(0,0),(1,0),(0,1),(1,1)\}$ in the grid.

We can assume that the first row of the grid is $\alpha,\beta_1,\ldots,\beta_4$ and
the first column is $\alpha,\gamma_1,\ldots,\gamma_4$,
which yields that the numbers are assigned to the grid as follows.
\[
\begin{array}{ccccc}
\alpha & \beta_1 & \beta_2 & \beta_3 & \beta_4 \\
\gamma_1 & -\alpha-\beta_1-\gamma_1 & +\alpha-\beta_2+\gamma_1 & -\alpha-\beta_3-\gamma_1 & +\alpha-\beta_4+\gamma_1 \\
\gamma_2 & +\alpha+\beta_1-\gamma_2 & -\alpha+\beta_2+\gamma_2 & +\alpha+\beta_3-\gamma_2 & -\alpha+\beta_4+\gamma_2 \\
\gamma_3 & -\alpha-\beta_1-\gamma_3 & +\alpha-\beta_2+\gamma_3 & -\alpha-\beta_3-\gamma_3 & +\alpha-\beta_4+\gamma_3 \\
\gamma_4 & +\alpha+\beta_1-\gamma_4 & -\alpha+\beta_2+\gamma_4 & +\alpha+\beta_3-\gamma_4 & -\alpha+\beta_4+\gamma_4 \\
\end{array}
\]
Since there is a square containing $\beta_1$, $\beta_2$, $\alpha+\beta_1-\gamma_4$ and $-\alpha+\beta_2+\gamma_4$,
we obtain $\beta_1=-\beta_2$.
By considering other squares wrapping around,
we obtain that $\beta_1=-\beta_2=\beta_3=-\beta_4$,
$\gamma_1=-\gamma_2=\gamma_3=-\gamma_4$ and $\beta_4=-\gamma_4$.
Hence, the table can be rewritten as follows.
\[
\begin{array}{ccccc}
\alpha & \beta_1 & -\beta_1 & \beta_1 & -\beta_1 \\
-\beta_1 & -\alpha & +\alpha & -\alpha & +\alpha \\
+\beta_1 & +\alpha & -\alpha & +\alpha & -\alpha \\
-\beta_1 & -\alpha & +\alpha & -\alpha & +\alpha \\
+\beta_1 & +\alpha & -\alpha & +\alpha & -\alpha
\end{array}
\]
Considering two of the squares containing the entry $\alpha$ in the top left corner,
we obtain $2(\alpha+\beta_1)=0$ and $2(\alpha-\beta_1)=0$. Hence $\alpha=\beta_1=0$, i.e., the vector $x$ is zero.
We conclude that the matrix $M$ is invertible, which completes the proof.
\end{proof}

We are now ready to prove the main lemma of this section.
To simplify our notation,
we will understand the subscripts indexing sets $B_{i,j}$ and $C_{i,j}$ in Lemma~\ref{lm:weights} as pairs $(i,j)\in\ZZ_5^2$,
which allows us to perform addition as with the elements of $\ZZ_5^2$,
e.g., $C_{(0,1)+(1,2)}$ is the set $C_{(1,3)}=C_{1,3}$.
In addition, we write $t_{v\to w}$ for $f(w)-f(v)$,
when $A$ is a coloring $\TT$-base and $f:V(G)\to\TT$ is a function such that $A+f(v)$ and $A+f(w)$ are disjoint for every edge $vw$.

\begin{lemma}
\label{lm:cellshifts}
Let $A\subseteq\TT$ be a coloring $\TT$-base of the graph $G_5$ with measure $4/25$ and
let $C_{i,j}\subseteq\TT$, $(i,j)\in\ZZ_5^2$, be the sets as in Lemma~\ref{lm:weights}.
For every $v\in\ZZ_5^2$ and $w\in\{(0,0),(0,1),(1,0),(1,1)\}$, it holds that
\begin{align}
C_{v+w}+ t_{v\to v+(1,0)} & \cong C_{v+w+(1,0)} \label{eq:cellshifts-h}\\
C_{v+w}+ t_{v\to v+(0,1)} & \cong C_{v+w+(0,1)} \label{eq:cellshifts-v}.
\end{align}
\end{lemma}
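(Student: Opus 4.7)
Set $t := t_{v\to v+(1,0)}$, so $B_v + t = B_{v+(1,0)}$ by construction. I will show $C_{v+w}+t \cong C_{v+w+(1,0)}$ for all $w \in U := \{(0,0),(1,0),(0,1),(1,1)\}$; the identity for $t_{v\to v+(0,1)}$ follows by the same reasoning with the two coordinates swapped.

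The starting point is a pointwise characterization of each $C$-set as a quadruple intersection of $B$-sets, namely $C_{i,j} \cong \bigcap_{e\in U} B_{(i,j)-e}$. Indeed, any point lying in all four of these $B$-sets belongs to a four-element independent set of $G_5$, and by Proposition~\ref{prop:attain1} this set is forced to be $I_{(i-1,j-1)}$, whence by Lemma~\ref{lm:weights} the point lies in $D_{I_{(i-1,j-1)}} = C_{i,j}$. Translating this characterization by $t$ gives $C_{v+w}+t \cong \bigcap_{e\in U}(B_{v+w-e}+t)$, whereas $C_{v+w+(1,0)} \cong \bigcap_{e\in U} B_{v+w+(1,0)-e}$. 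Thus the lemma reduces to establishing the auxiliary translation identities $B_u + t \cong B_{u+(1,0)}$ for every $u$ in the $3\times 3$ neighborhood $\{v+d : d \in \{-1,0,1\}^2\}$ of $v$ in $\ZZ_5^2$; granting these, the two quadruple intersections match $B$-set by $B$-set.

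These auxiliary identities amount to a form of local linearity of $f$, asserting that $f(u+(1,0)) - f(u) = t$ throughout the $3\times 3$ neighborhood of $v$, and establishing this is the main technical obstacle. The plan is to exploit the fact (readable off from Lemma~\ref{lm:weights}) that for any two vertices $u, u'$ of $G_5$ the overlap measure $\mu(B_u \cap B_{u'})$ depends only on $u'-u \in \ZZ_5^2$, taking the values $4/25$, $2/25$, $1/25$, or $0$ according as $u'-u$ equals $0$, a "unit" translate, a "diagonal" translate, or lies in the generating set of $G_5$. Because translations preserve intersection measures, for each $u$ in the neighborhood one can test the candidate $B_u + t$ against every other $B_{u'}$ by comparing $\mu((B_u + t)\cap B_{u'}) = \mu(B_u \cap (B_{u'}-t))$ with $\mu(B_{u+(1,0)}\cap B_{u'})$. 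The disjointness constraints are especially constraining: from the adjacency of $v$ and $v+(2,0)$ in $G_5$ one obtains $(B_{v+(2,0)}+t)\cap B_{v+(1,0)} = \emptyset$, and repeating such arguments for neighboring pairs builds up a system of overlap and disjointness conditions that pins $B_u + t$ down to be exactly $B_{u+(1,0)}$ for each $u$ in the $3\times 3$ neighborhood. Once all nine auxiliary identities are in hand, the quadruple-intersection characterizations of $C_{v+w}+t$ and $C_{v+w+(1,0)}$ coincide term by term, completing the proof.
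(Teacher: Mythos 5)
Your quadruple-intersection characterization $C_{i,j} \cong \bigcap_{e\in U} B_{(i,j)-e}$ (with $U=\{(0,0),(1,0),(0,1),(1,1)\}$) is correct, and the observation that it follows from Proposition~\ref{prop:attain1} is a nice structural reformulation of Lemma~\ref{lm:weights}. However, the reduction it leads to is the wrong one: to match the two quadruple intersections term by term you need the auxiliary identities $B_u+t\cong B_{u+(1,0)}$ for \emph{all nine} $u\in v+\{-1,0,1\}^2$, and these are \emph{strictly stronger} than the lemma you are proving. For instance $B_{v+(0,1)}+t\cong B_{v+(1,1)}$ decomposes into four $C$-cell shifts, two of which ($C_{v+(0,2)}+t\cong C_{v+(1,2)}$ and $C_{v+(1,2)}+t\cong C_{v+(2,2)}$) concern cells $v+w$ with $w\notin U$, which the lemma does not assert. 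Those would only be available after applying the lemma at other vertices $v'$, where the shift is $t_{v'\to v'+(1,0)}$ rather than $t$; identifying the two shifts is exactly the uniformity established much later in Lemma~\ref{lm:2generators}. So you have reduced the statement to something that, at this point in the development, is at least as hard.

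The bigger gap is in the argument for those auxiliary identities. You propose to ``test the candidate $B_u+t$ against every other $B_{u'}$ by comparing $\mu((B_u+t)\cap B_{u'})$ with $\mu(B_{u+(1,0)}\cap B_{u'})$,'' but the first of these measures is not a priori computable: $\mu((B_u+t)\cap B_{u'})=\mu\bigl(A\cap(A+f(u')-f(u)-t)\bigr)$, and this depends on the actual value $f(u')-f(u)-t$, which differs from $f(u')-f(u+(1,0))$ precisely by the unknown quantity $t_{u\to u+(1,0)}-t$. So the overlap-measure ``system'' does not pin anything down unless you already know the very translation identity you are after. What actually makes the paper's argument work is a more delicate bootstrapping that never appeals to such a system. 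It starts from the single exact disjointness $B_v\cap B_{v+(2,0)}=\emptyset$, shifts it by $t$ to get $B_{v+(1,0)}\cap\bigl(B_{v+(1,0)}+t_{v\to v+(2,0)}\bigr)=\emptyset$, then observes that the ``right half'' $C_{v+(2,0)}\cup C_{v+(2,1)}$ of $B_{v+(2,0)}$ and the shifted ``right half'' $\bigl(C_{v+(1,0)}\cup C_{v+(1,1)}\bigr)+t_{v\to v+(2,0)}$ are both subsets of $B_{v+(2,0)}$ of half its measure and are disjoint, hence partition it up to a null set. Chaining this around $\ZZ_5$ and subtracting gives the half-shift identities \eqref{eq:cellshifts-h1} and \eqref{eq:cellshifts-h2}, and two further null-intersection arguments (each again producing a forced disjointness that contradicts a positive-measure overlap) split the half-shifts into the four individual $C$-cell shifts. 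None of this machinery is present in your sketch, and the phrase ``builds up a system of overlap and disjointness conditions that pins $B_u+t$ down'' is where the proof actually needs to happen.
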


\begin{figure}
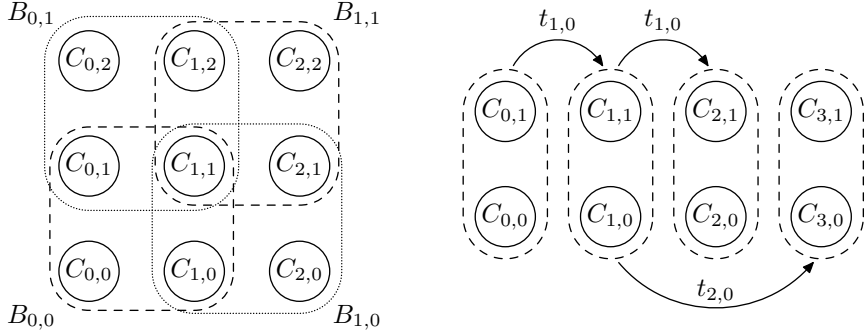

\begin{center}
\epsfbox{gyrocol-1.mps}
\hskip 5ex
\epsfbox{gyrocol-2.mps}
\end{center}
\caption{Visualization of the notation used in the proof of Lemma~\ref{lm:cellshifts} and
         the equalities \eqref{eq:cellshifts-h1}, \eqref{eq:cellshifts-h2} and \eqref{eq:cellshiftbytwo}.}
\label{fig:cellshifts}
\end{figure}

\begin{proof}
By symmetry, we will assume that $v=(0,0)$ in our presentation
with the exception of equality~\eqref{eq:cellshiftbytwo-v}, which we formulate in the general setting.
Throughout the proof, we write $B_{i,j}\subseteq\TT$ for the sets as in Lemma~\ref{lm:weights};
to simplify our notation, we also write $t_{i,j}$ for $t_{(0,0)\to (i,j)}$.

Our first goal is to prove the following weaker statement, which is also visualized in Figure~\ref{fig:cellshifts}.
\begin{align}
(C_{0,0}\cup C_{0,1})+t_{1,0} & \cong C_{1,0}\cup C_{1,1} \label{eq:cellshifts-h1}\\
(C_{1,0}\cup C_{1,1})+t_{1,0} & \cong C_{2,0}\cup C_{2,1} \label{eq:cellshifts-h2}
\end{align}
Since the vertices $(0,0)$ and $(2,0)$ are adjacent, the sets $B_{0,0}$ and $B_{2,0}=B_{0,0}+t_{2,0}$ are disjoint, and
so are these sets shifted by $t_{1,0}$,
i.e., the sets $B_{0,0}+t_{1,0}=B_{1,0}$ and $B_{0,0}+t_{2,0}+t_{1,0}=B_{1,0}+t_{2,0}$.
In particular,
the intersection of $C_{2,0}\cup C_{2,1}\subseteq B_{1,0}$ and
$(C_{1,0}\cup C_{1,1})+t_{2,0}\subseteq B_{1,0}+t_{2,0}$ is empty.
Since both $C_{2,0}\cup C_{2,1}$ and $(C_{1,0}\cup C_{1,1})+t_{2,0}$ are subsets of $B_{2,0}$ and
the measure of each of them is half of the measure of $B_{2,0}$,
it follows that the sets $(C_{1,0}\cup C_{1,1})+t_{2,0}$ and
$B_{2,0}\setminus(C_{2,0}\cup C_{2,1})=C_{3,0}\cup C_{3,1}$ are the same (up to a null set), i.e.,
\begin{equation}
(C_{1,0}\cup C_{1,1})+t_{2,0}\cong C_{3,0}\cup C_{3,1}. \label{eq:cellshiftbytwo}
\end{equation}
We formulate \eqref{eq:cellshiftbytwo} for an arbitrary vertex $v$ since we need the statement later:
\begin{equation}
(C_{v+(1,0)}\cup C_{v+(1,1)})+t_{v\to v+(2,0)}\cong C_{v+(3,0)}\cup C_{v+(3,1)}. \label{eq:cellshiftbytwo-v}
\end{equation}
We next apply \eqref{eq:cellshiftbytwo-v} with $v=(1,0)$ and $v=(3,0)$ as follows:
\begin{align*}
(C_{2,0}\cup C_{2,1})-t_{(0,0)\to(1,0)} & =  (C_{2,0}\cup C_{2,1})+t_{(1,0)\to(0,0)} \\
& = (C_{2,0}\cup C_{2,1})+t_{(1,0)\to(3,0)}+ t_{(3,0)\to (0,0)}\\
& \cong ( C_{4,0}\cup C_{4,1})+ t_{(3,0)\to (0,0)} \\
& \cong C_{1,0}\cup C_{1,1},
\end{align*}
which proves \eqref{eq:cellshifts-h2}. Since both the sets $(C_{0,0}\cup C_{0,1}\cup C_{1,0}\cup C_{1,1})+t_{1,0}$ and
$C_{1,0}\cup C_{1,1}\cup C_{2,0}\cup C_{2,1}$ are equal to $B_{1,0}$,
in particular, they are the same set, and all sets $C_{i,j}$, $(i,j)\in\ZZ_5^2$, are disjoint,
the equality \eqref{eq:cellshifts-h1} also follows.

An argument symmetric to the one used to prove \eqref{eq:cellshifts-h1} and \eqref{eq:cellshifts-h2}
yields the following.
\begin{align}
(C_{0,0}\cup C_{1,0})+t_{0,1} & \cong C_{0,1}\cup C_{1,1} \label{eq:cellshifts-v1}\\
(C_{0,1}\cup C_{1,1})+t_{0,1} & \cong C_{0,2}\cup C_{1,2} \label{eq:cellshifts-v2}
\end{align}

Next suppose for a contradiction that the intersection of $C_{0,0}+t_{1,0}$ and $C_{1,1}$ has positive measure, and
let $X$ be the set $C_{0,0}\cap(C_{1,1}-t_{1,0})$.
The equality \eqref{eq:cellshifts-v2} implies that $X+t_{1,0}+t_{0,1}$ is a subset of $C_{0,2}\cup C_{1,2}$ (up to a null set).
On the other hand $X+t_{0,1}\subseteq C_{0,0}+t_{0,1}$ is a subset of $C_{0,1}\cup C_{1,1}$ by \eqref{eq:cellshifts-v1}, and this is a subset of $B_{0,0}$, 
hence $(X+t_{0,1})+t_{1,0}\subseteq B_{0,0}+t_{1,0}=B_{1,0}$. However $B_{1,0}\cong C_{1,0}\cup C_{1,1}\cup C_{2,0}\cup C_{2,1}$ has null intersection with $C_{0,2}\cup C_{1,2}$. We have thus deduced that $X+t_{1,0}+t_{0,1}$ is included (up to a null set) in the null set $B_{1,0}\cap (C_{0,2}\cup C_{1,2})$, which contradicts the assumption that $X+t_{0,1}+t_{1,0}$ has positive measure.
We conclude that the intersection of $C_{0,0}+t_{1,0}$ and $C_{1,1}$ is null.
Since all the sets $C_{0,0}$, $C_{0,1}$, $C_{1,0}$ and $C_{1,1}$ have the same measure,
the equality \eqref{eq:cellshifts-h1} implies that
\begin{equation}
C_{0,0}+t_{1,0} \cong C_{1,0}\mbox{ and }C_{0,1}+t_{1,0} \cong C_{1,1}.\label{eq:celshifts-heq1}
\end{equation}
A symmetric argument implies that
\begin{equation}
C_{0,0}+t_{0,1} \cong C_{0,1}\mbox{ and }C_{1,0}+t_{0,1} \cong C_{1,1}.\label{eq:celshifts-veq1}
\end{equation}

We now prove that the intersection of the sets $C_{1,1}+t_{0,1}$ and $C_{0,2}$ is null.
Assume the contrary, i.e., the set $X=C_{1,1}\cap (C_{0,2}-t_{0,1})$ has positive measure.
The equality \eqref{eq:cellshiftbytwo-v} applied with $v=(0,1)$ implies that
\[(C_{1,1}\cup C_{1,2})+t_{(0,1)\to (2,1)}\cong C_{3,1}\cup C_{3,2}.\]
Since it holds that $B_{0,1}+t_{(0,1)\to (2,1)}=B_{2,1}$,
we get that
\[(C_{0,1}\cup C_{1,1}\cup C_{0,2}\cup C_{1,2})+t_{(0,1)\to (2,1)}\cong C_{2,1}\cup C_{3,1}\cup C_{2,2}\cup C_{3,2}.\]
Since all the sets $C_{i,j}$ are disjoint,
it follows that
\[(C_{0,1}\cup C_{0,2})+t_{(0,1)\to (2,1)}\cong C_{2,1}\cup C_{2,2}.\]
Since $X+t_{0,1}$ is a subset of $C_{0,2}$, we obtain that
\begin{equation}
X+t_{2,1}=X+t_{0,1}+t_{(0,1)\to (2,1)}\subseteq C_{0,2}+t_{(0,1)\to (2,1)}\subseteq C_{2,1}\cup C_{2,2}\subseteq B_{1,1}.\label{eq:cellshifts-hh}
\end{equation}
On the other hand, it holds that $X\subseteq C_{1,1}\subseteq B_{1,1}$.
Hence, the intersection of the sets $B_{1,1}$ and $B_{1,1}+t_{2,1}$ contains the set $X+t_{2,1}$,
in particular, it has positive measure.
However, this is impossible because $B_{1,1}=B_{0,0}+t_{1,1}$ and $B_{1,1}+t_{2,1}=B_{2,1}+t_{1,1}$ and
the sets $B_{0,0}$ and $B_{2,1}$ are disjoint.
We conclude that the intersection of sets $C_{1,1}+t_{0,1}$ and $C_{0,2}$ is null.
Using \eqref{eq:cellshifts-v2}, we obtain that
\[C_{0,1}+t_{0,1} \cong C_{0,2}\mbox{ and }C_{1,1}+t_{0,1} \cong C_{1,2},\]
and a symmetric argument yields that
\[C_{1,0}+t_{1,0} \cong C_{2,0}\mbox{ and }C_{1,1}+t_{1,0} \cong C_{2,1}.\]
The proof is now complete.
\end{proof}

Our next step is to deduce that the elements $t_{v\to w}$ can be replaced by integer combinations of the elements $t_{(0,0)\to (1,0)}$ and $t_{(0,0)\to (0,1)}$.

\begin{lemma}
\label{lm:2generators}
Let $A\subseteq\TT$ be a coloring $\TT$-base of the graph $G_5$ with measure $4/25$ and
let $C_{i,j}\subseteq\TT$, $(i,j)\in\ZZ_5^2$, be the sets as in Lemma~\ref{lm:weights}.
It holds that
\[C_{i\!\!\!\!\mod 5,\;\,j\!\!\!\!\mod 5}\cong C_{0,0}+it_{(0,0)\to (1,0)}+jt_{(0,0)\to (0,1)}\]
for any two non-negative integers $i$ and $j$.
\end{lemma}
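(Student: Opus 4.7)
The plan is to prove the lemma by induction on $i+j$, using Lemma~\ref{lm:cellshifts} at each step. A naive induction would require the elementary shifts $t_{(a,b)\to(a+1,b)}$ all to equal $t_{(0,0)\to(1,0)}$, which need not hold literally. Instead, I show that these shifts coincide modulo the stabilizer
\[H := \{x\in\TT : C_{0,0}+x\cong C_{0,0}\}.\]
Since every $C_{i,j}$ is a translate of $C_{0,0}$ by Lemma~\ref{lm:cellshifts}, each $C_{i,j}$ has the same stabilizer $H$, so replacing a shift by an $H$-equivalent one has no effect on the resulting set (up to a null set).

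The key preliminary claim is that $t_{(a,b)\to(a+1,b)}\equiv t_{(0,0)\to(1,0)} \pmod{H}$ and $t_{(a,b)\to(a,b+1)}\equiv t_{(0,0)\to(0,1)} \pmod{H}$ for every $(a,b)\in\ZZ_5^2$. To establish the first, fix $(a,b)$ and apply the horizontal identity of Lemma~\ref{lm:cellshifts} twice to express $C_{(a+1,b)}$ as a translate of $C_{(a,b)}$: once with $v=(a,b)$ and $w=(0,0)$, giving $C_{(a,b)}+t_{(a,b)\to(a+1,b)}\cong C_{(a+1,b)}$, and once with $v=(a-1,b)$ and $w=(1,0)$, giving $C_{(a,b)}+t_{(a-1,b)\to(a,b)}\cong C_{(a+1,b)}$. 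Comparing yields $t_{(a,b)\to(a+1,b)}\equiv t_{(a-1,b)\to(a,b)}\pmod{H}$; iterating along the row gives $t_{(a,b)\to(a+1,b)}\equiv t_{(0,b)\to(1,b)}\pmod{H}$. An analogous pair of applications with $v=(0,b-1), w=(0,1)$ versus $v=(0,b), w=(0,0)$ yields $t_{(0,b)\to(1,b)}\equiv t_{(0,b-1)\to(1,b-1)}\pmod{H}$, and iterating in $b$ gives $t_{(0,b)\to(1,b)}\equiv t_{(0,0)\to(1,0)}\pmod{H}$. The vertical case is entirely symmetric.

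With the preliminary congruence in hand, the induction is routine. Writing $t_h := t_{(0,0)\to(1,0)}$ and $t_v := t_{(0,0)\to(0,1)}$, the base case $i=j=0$ is trivial. For the step from $(i,j)$ to $(i+1,j)$, set $(a,b)=(i\bmod 5,\,j\bmod 5)$; Lemma~\ref{lm:cellshifts} with $v=(a,b), w=(0,0)$ gives $C_{(a+1)\bmod 5,\,b}\cong C_{a,b}+t_{(a,b)\to(a+1,b)}$. The inductive hypothesis gives $C_{a,b}\cong C_{0,0}+it_h+jt_v$, and since this translate of $C_{0,0}$ is $H$-invariant, the shift $t_{(a,b)\to(a+1,b)}$ may be replaced by $t_h$, yielding $C_{(i+1)\bmod 5,\,j\bmod 5}\cong C_{0,0}+(i+1)t_h+jt_v$. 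The step from $(i,j)$ to $(i,j+1)$ is analogous.

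The main obstacle is the preliminary congruence claim: propagating the $H$-equivalences consistently across both directions of $\ZZ_5^2$ from local comparisons involving only the four translates $w\in\{(0,0),(0,1),(1,0),(1,1)\}$ allowed by Lemma~\ref{lm:cellshifts}. Once that is in place, the induction itself is a straightforward bookkeeping exercise.
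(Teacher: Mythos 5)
Your proof is correct, but it takes a genuinely different route from the paper's. You define the translation stabilizer $H=\{x\in\TT: C_{0,0}+x\cong C_{0,0}\}$ and first establish the stronger intermediate claim that \emph{every} elementary shift $t_{(a,b)\to(a+1,b)}$ is congruent to $t_{(0,0)\to(1,0)}$ modulo $H$ (and symmetrically for vertical shifts), by comparing two expressions for $C_{a+1,b}$ as a translate of $C_{a,b}$ obtained from the $(v,w)$-parametrizations $(v,w)=((a,b),(0,0))$ and $(v,w)=((a-1,b),(1,0))$ in Lemma~\ref{lm:cellshifts}; once that congruence is in hand, the induction on $i+j$ is direct. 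The paper's proof sidesteps the stabilizer entirely: it exploits the $w$-freedom of Lemma~\ref{lm:cellshifts} to build two parallel chains from $C_{0,0}$ to $C_{i,j-1}$ and from $C_{0,1}$ to $C_{i,j}$ using the \emph{same} sequence of shifts (taking $w=(0,0)$ in the first chain and $w=(0,1)$ in the second), which immediately gives $C_{i,j}\cong C_{i,j-1}+t_{(0,0)\to(0,1)}$ without ever needing to know that the individual elementary shifts are $H$-equivalent. The paper's argument is more economical; yours proves a slightly stronger intermediate fact about the shifts, which is arguably more illuminating but requires the separate observation (trivially obtained by iterating Lemma~\ref{lm:cellshifts}) that every $C_{i,j}$ is some translate of $C_{0,0}$ and so shares the stabilizer $H$. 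Both approaches are sound.
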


\begin{proof}
We proceed by induction on $i+j\in \ZZ$;
all calculations with subscripts are done modulo five throughout the proof.
The base of the induction is the case $i+j\in\{0,1\}$, which is implied by Lemma~\ref{lm:cellshifts}.
For the rest of the proof fix $i$ and $j$.
By symmetry, we may assume that $j>0$.
Applying Lemma~\ref{lm:cellshifts} with $v=(0,0),(1,0),\ldots,(i,0),(i,1),\ldots,(i,j-1)$,
we obtain the following.
\begin{align}
C_{i,j-1} &\cong C_{0,0}+t_{(0,0)\to(1,0)}+\cdots+t_{(i-1,0)\to (i,0)}+t_{(i,0)\to (i,1)}+\cdots+t_{(i,j-2)\to (i,j-1)}\label{eq:2gen1}\\
C_{i,j} &\cong C_{0,1}+t_{(0,0)\to(1,0)}+\cdots+t_{(i-1,0)\to (i,0)}+t_{(i,0)\to (i,1)}+\cdots+t_{(i,j-2)\to (i,j-1)}\label{eq:2gen2}
\end{align}
Since $C_{0,1}\cong C_{0,0}+t_{(0,0)\to (0,1)}$ by Lemma~\ref{lm:cellshifts},
we conclude using \eqref{eq:2gen1} and \eqref{eq:2gen2} that
\begin{equation}
C_{i,j}\cong C_{i,j-1}+t_{(0,0)\to (0,1)}.\label{eq:2gen3}
\end{equation}
On the other hand, the induction yields
\[C_{i,j-1} \cong C_{0,0}+it_{(0,0)\to (1,0)}+(j-1)t_{(0,0)\to (0,1)},\]
which combines with \eqref{eq:2gen3} to imply
\[C_{i,j} \cong  C_{0,0}+it_{(0,0)\to (1,0)}+jt_{(0,0)\to (0,1)}.\]
\end{proof}

We are now ready to prove the main theorem of this section.

\begin{theorem}
\label{thm:attain}
The graph $G_5$ has no coloring $\TT$-base with measure $4/25$.
\end{theorem}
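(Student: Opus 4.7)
The plan is to argue by contradiction, leveraging the structural results just established. Suppose $A \subseteq \TT$ is a coloring $\TT$-base of $G_5$ with $\mu(A) = 4/25$, and let the sets $C_{i,j}$, together with $s := t_{(0,0)\to(1,0)}$ and $t := t_{(0,0)\to(0,1)}$, be as in Lemmas~\ref{lm:weights}--\ref{lm:2generators}. First I would extract from Lemma~\ref{lm:2generators} the periodicity constraints
\[
C_{0,0} \cong C_{0,0} + 5s \qquad \text{and} \qquad C_{0,0} \cong C_{0,0} + 5t,
\]
obtained by specializing to $(i,j)=(5,0)$ and $(0,5)$. Define $H := \{x \in \TT : C_{0,0}+x \cong C_{0,0}\}$, the period subgroup of $C_{0,0}$. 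By the $L^1$-continuity of translation, $H$ is a closed subgroup of $\TT$; since $\mu(C_{0,0}) = 1/25 \notin \{0, 1\}$, $H$ is proper, so it must be a finite cyclic subgroup of $\TT$. By the displayed equations, both $5s$ and $5t$ lie in $H$.

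The contradiction will come from passing to the quotient $\TT/H$, which is again isomorphic to $\TT$. Since $C_{0,0}$ is $H$-invariant, it descends to a Borel set $\bar{C} \subseteq \TT/H$ of measure $1/25$, and the 25 cells $C_{i,j} \cong C_{0,0}+is+jt$ descend to the 25 translates $\bar{C} + i\bar{s} + j\bar{t}$, where $\bar{s}, \bar{t}$ are the images of $s, t$. Because $5s, 5t \in H$, we have $5\bar{s} = 5\bar{t} = 0$ in $\TT/H \cong \TT$, so $\bar{s}$ and $\bar{t}$ both lie in the unique order-$5$ subgroup $\tfrac{1}{5}\ZZ/\ZZ$ of $\TT/H$. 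Consequently the 25 shifts $i\bar{s} + j\bar{t}$ with $i, j \in \{0, \dots, 4\}$ take at most $5$ distinct values in $\TT/H$. On the other hand, the 25 cells $C_{i,j}$ are pairwise disjoint of positive measure, so their images in $\TT/H$ are pairwise distinct; a short argument shows that the period subgroup of $\bar{C}$ in $\TT/H$ is trivial (any period of $\bar{C}$ would lift to a period of $C_{0,0}$ outside $H$, contradicting the definition of $H$), so distinct translates of $\bar{C}$ must come from distinct shifts. Hence the 25 shifts $i\bar{s} + j\bar{t}$ are pairwise distinct, and $25 > 5$ gives the desired contradiction.

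The main technical obstacle I anticipate is the careful null-set bookkeeping required to pass to the quotient: verifying that ``$H$-invariant up to a null set'' yields a well-defined descent $\bar{C}$ in $\TT/H$ of the same measure, and that the period subgroup of $\bar{C}$ in $\TT/H$ is indeed trivial. Once this setup is in place, the crux of the argument is simply the observation that the periodicity $5s, 5t \in H$ forces the $5 \times 5$ grid of shifts to live inside a 5-element cyclic subgroup of the quotient, which is too small to separate the 25 disjoint cells.
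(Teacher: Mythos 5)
Your proof is correct, and it takes a genuinely different route from the paper's. The paper extracts the same starting fact — that $5\tau$ and $5\tau'$ are periods of $C_{0,0}$ up to null sets — and then argues that $5\tau$ and $5\tau'$ must therefore be rational (an irrational period, by density of its orbit and $L^1$-continuity of translation, would force measure zero or one). It then writes $\tau=p/(5^rq)$ and $\tau'=p'/(5^{r'}q')$ with $5\nmid q,q'$, chooses $k$ so that $q\tau$ and $kq'\tau'$ coincide in $\TT$, and observes that $C_{0,0}+q\tau\cong C_{q\bmod 5,\,0}$ and $C_{0,0}+kq'\tau'\cong C_{0,\,kq'\bmod 5}$ are forced to coincide despite being disjoint. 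You instead bundle the density/rationality reasoning into the single observation that the period subgroup $H$ of $C_{0,0}$ is a proper closed subgroup of $\TT$, hence finite cyclic, and then pass to $\TT/H\cong\TT$, where the images $\bar{s},\bar{t}$ lie in the unique $5$-element subgroup. The pigeonhole contradiction then falls out cleanly: at most five shift values cannot separate $25$ pairwise-disjoint translates of a set of positive measure. (In fact you need slightly less than you invoke: you do not need the period triviality of $\bar{C}$ to conclude, since equality of two shift values immediately forces the corresponding cells to coincide up to a null set, contradicting disjointness; but the period-triviality argument is also correct.) Your quotient-group version is more conceptual and avoids the explicit fraction bookkeeping; the paper's version is more elementary and self-contained in that it never leaves $\TT$. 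The null-set bookkeeping you flag is handled by the standard trick of replacing $C_{0,0}$ by the genuinely $H$-invariant representative $\bigcap_{h\in H}(C_{0,0}+h)$, which differs from $C_{0,0}$ only on a null set because $H$ is finite.
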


\begin{proof}
Suppose that there exists a coloring $\TT$-base $A\subseteq\TT$ with measure $4/25$, and
let $C_{i,j}\subseteq\TT$, $(i,j)\in\ZZ_5^2$, be the sets as in Lemma~\ref{lm:weights}.
Further, let $\tau=t_{(0,0)\to(1,0)}$ and $\tau'=t_{(0,0)\to(0,1)}$;
note that $\tau$ and $\tau'$ do not necessarily generate a subgroup isomorphic to $\ZZ_5^2$.
By Lemma~\ref{lm:2generators} we have $C_{0,0}\cong C_{0,0}+5\tau n$ and $C_{0,0}\cong C_{0,0}+5\tau' n$ for any integer $n$.
If $5\tau$ were irrational, then the measure of $C_{0,0}$ would be either zero or one.
Therefore, $5\tau $ is rational.
Similarly, $5\tau'$ is rational.
It follows that both $\tau$ and $\tau'$ are rational.
Let $p,q,r,p',q',r'$ be non-negative integers such that
\[\tau=\frac{p}{5^rq}\qquad\mbox{and}\qquad \tau'=\frac{p'}{5^{r'}q'},\]
$p$ and $5^rq$ are coprime,
$p'$ and $5^{r'}q'$ are coprime, and
neither $q$ nor $q'$ is divisible by five.
By symmetry, we may assume that $r\le r'$.
Let $k$ be an integer such that $5^{r'-r}p$ and $kp'$ are congruent modulo $5^{r'}$;
note that such $k$ exists since $p'$ and $5^{r'}$ are coprime.
Next observe that
\begin{equation}
q\tau=\frac{5^{r'-r}p}{5^{r'}}\qquad\mbox{and}\qquad kq'\tau'=\frac{kp'}{5^{r'}}=\frac{5^{r'-r}p}{5^{r'}}\;\mod 1.
\label{eq:att1}
\end{equation}
By Lemma~\ref{lm:2generators}, we obtain that
\begin{align*}
C_{0,0}+q\tau &\cong C_{0,0}+qt_{(0,0)\to(1,0)}\cong C_{q\!\!\!\mod 5,\;\,0}\\
C_{0,0}+kq'\tau' &\cong C_{0,0}+kq't_{(0,0)\to(0,1)}\cong C_{0,\;\,kq'\!\!\!\mod 5}.
\end{align*}
Since it holds that $q\!\!\mod 5\not=0$ and the sets $C_{i,j}$ are disjoint sets of measure $1/25$ by Lemma~\ref{lm:weights},
we obtain that
the intersection of the sets $C_{0,0}+q\tau\cong C_{q\!\!\mod 5,\;\,0}$ and $C_{0,0}+kq'\tau'\cong C_{0,\;\,kq'\!\!\mod 5}$ is null.
However, $q\tau$ and $kq'\tau'$ is the same element of $\TT$ by \eqref{eq:att1}, i.e., $C_{0,0}+q\tau\cong C_{0,0}+kq'\tau'$.
This contradicts our assumption on the existence of a coloring $\TT$-base with measure $4/25$.
\end{proof}

\section{Conclusion}

We finish with giving four open problems that we find particularly interesting and
briefly mentioning a relation of the gyrochromatic number to another graph parameter,
the ultimate independence ratio of a graph.
The \emph{independence ratio} $i(G)$ of a graph $G$ is the ratio $\alpha(G)/|V(G)|$;
the \emph{ultimate independence ratio} $I(G)$,
which was introduced in~\cite{HelYZ94},
is the limit of the independence ratios of Cartesian powers of $G$:
\[I(G)=\lim_{k\to\infty}\frac{\alpha(G^k)}{|V(G^k)|},\]
where $G^k$ is the Cartesian product of $k$ copies of $G$.
The inverse of this quantity is the \emph{ultimate fractional chromatic number} $\chi_F(G)$ of a graph $G$ and
the following holds~\cite{HahHP95,Zhu96}:
\[\chi_F(G)=\frac{1}{I(G)}=\lim_{k\to\infty}\chi_f(G^k).\]
Zhu~\cite[p.\ 236]{Zhu96} related the ultimate fractional chromatic number to coloring bases of abelian groups (though he used different terminology), via the following inequality:
\[
I(G)\geq \sup\left\{\tfrac{\alpha(H)}{|V(H)|}\: \big|\: G\to H, H \text{ is a Cayley graph on a finite abelian group}\right\}.
\]
Using Corollary~\ref{cor:eqA}, we conclude that $\chi_F(G)\le\chi_g(G)$.
Hence, we obtain that the following holds for every graph $G$:
\[\chi_f(G)\le\chi_F(G)\le\chi_g(G)\le\chi_c(G)\le\chi(G).\]
It seems plausible that $\chi_F(G)$ and $\chi_g(G)$
differ for some graphs $G$, and it would be interesting to give examples of such graphs.

\begin{problem}
\label{prob:0}
Construct a (connected) graph $G$ such that $\chi_F(G)<\chi_g(G)$.
\end{problem}

We finish with two problems on the gyrochromatic number and its relation to fractional and circular chromatic numbers,
which we believe to be of particular interest.

\begin{problem}
\label{prob:1}
Does there exist a function $f:\RR\to\RR$ such that $\chi_g(G)\le f(\chi_f(G))$ for every graph $G$?
\end{problem}

\begin{problem}
\label{prob:2}
Does there exist a finite graph $G$ such that the gyrochromatic number of $G$ is not rational?
\end{problem}

Observe that a function $f$ in Problem~\ref{prob:1} exists for all graphs $G$ if and only if it exists for Kneser graphs,
i.e., such a function $f$ exists if and only if the gyrochromatic number of Kneser graph $K(m,n)$ is at most $f(m/n)$.
Also note that Theorem~\ref{thm:sandwich} implies that the circular chromatic number of a graph
cannot be upper bounded by a function of its gyrochromatic number,
i.e., a function $f$ as in Problem~\ref{prob:1} does not exist for the gyrochromatic and the circular chromatic number.

In Section~\ref{sec:attain}, we have constructed a graph $G$ such that
there is no coloring $\TT$-base for $G$ of measure $\sigma_{\TT}(G)$,
i.e., the supremum in \eqref{eq:sigma} is not attained.
However, the constructed graph $G$
has a coloring $\TT^2$-base with measure $\chi_g(G)^{-1}$ (see Proposition~\ref{prop:attain2} and the remark before it),
which leads to the following problem.

\begin{problem}
\label{prob:3}
Does there exist for every graph $G$ an integer $d$ such that
$G$ has a $\TT^d$-coloring base with measure $\chi_g(G)^{-1}$?
\end{problem}

\section*{Acknowledgements}

The authors would like to thank Xuding Zhu and two anonymous reviewers for their valuable comments.

\bibliographystyle{abbrv}
\bibliography{gyrocol}

\Addresses

\end{document}